\newtheorem{thm}{Theorem}[section]
\newtheorem{lem}[thm]{Lemma}
\newtheorem{prop}[thm]{Proposition}
\numberwithin{equation}{section}
\providecommand{\abs}[1]{\lvert#1\rvert}
\newcommand{\ho}{\accentset{\circ}{h}}
\newcommand{\p}{\partial}
\newcommand{\nablap}{\accentset{\bot}{\nabla}}
\newcommand{\Rp}{\accentset{\perp}{R}}
\author{Charles Baker}\thanks{Research partially supported by Discovery Grant DP0985802 from the Australian Research Council and partially by The Leverhulme Trust}
\email{rogercharlesbaker@gmail.com}
\title[Singularities of the mean curvature flow in high codimension]{A partial classification of type I singularities of the mean curvature flow in high codimension}
\begin{document}
\maketitle

\section{Introduction}
The purpose of this article is to examine the possible shapes of type I singularities that form in the mean curvature flow of submanifolds of arbitrary codimension, assuming that the initial submanifold satisfies a particular curvature pinching condition.  The mean curvature flow of an initial immersion $F_0 : M^n \rightarrow \mathbb{R}^{n+k}$, where we always assume $M^n$ is a closed manifold, is given by a time-dependent family of immersions $F : M \times [0,T) \rightarrow \mathbb{R}^{n+k}$ that satisfy
\begin{equation*}
	\begin{cases}
		\frac{\p}{\p t} F(p,t) = H(p,t), \quad p \in M, \, t \geq 0 \\
		F( \cdot, 0) = F_0.
	\end{cases}
\end{equation*}
The author and Ben Andrews (\cite{AB}) recently proved the following Pinching Lemma for submanifolds of arbitrary codimension moving by the mean curvature flow:
\begin{lem}\label{l: preservation of pinching}
If a solution $F : M^n \times[0,T) \to \mathbb{R}^{n+k}$ of the mean curvature flow satisfies $\abs{h}^2 +a < c\abs{H}^2$ for some constants $ c \leq \frac1n + \frac{1}{3n}$ and $a>0$ at $t = 0$, then this remains true for all $0 \leq t < T$.
\end{lem}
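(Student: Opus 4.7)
\emph{Plan.} The argument is a straightforward scalar maximum-principle applied to the pinching function
\[
Q := \abs{h}^2 + a - c\abs{H}^2.
\]
By hypothesis $Q|_{t=0} < 0$, and since $M^n$ is closed it suffices to rule out a first point--time $(p_0,t_0)$, $t_0\in(0,T)$, at which $Q$ touches zero. At such a point $\p_t Q \geq 0$ and $\Delta Q \leq 0$, so we want to derive a contradiction by showing that $(\p_t-\Delta)Q < 0$ holds whenever the saturation equation $\abs{h}^2 + a = c\abs{H}^2$ is in force.

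The starting input is the pair of Simons-type evolution equations, valid in arbitrary codimension,
\[
(\p_t-\Delta)\abs{h}^2 = -2\abs{\nabla h}^2 + 2R_1,\qquad (\p_t-\Delta)\abs{H}^2 = -2\abs{\nabla H}^2 + 2R_2,
\]
where $R_1$ assembles the quartic contractions of the second fundamental form (including a term in the normal curvature $\Rp$) and $R_2 = \sum_{i,j}\bigl\lvert\sum_\alpha H^\alpha h_{ij}^\alpha\bigr\rvert^2$. Subtracting $c$ times the second from the first gives
\[
(\p_t-\Delta)Q = -2\bigl(\abs{\nabla h}^2 - c\abs{\nabla H}^2\bigr) + 2(R_1 - cR_2).
\]
The gradient terms are handled by the Kato-type inequality $\abs{\nabla h}^2 \geq \tfrac{3}{n+2}\abs{\nabla H}^2$, a consequence of Codazzi; a direct check shows that the assumed $c \leq \tfrac{1}{n}+\tfrac{1}{3n} = \tfrac{4}{3n}$ implies $c \leq \tfrac{3}{n+2}$ for every $n\geq 2$, so the gradient contribution to $(\p_t-\Delta)Q$ is nonpositive. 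Everything thus reduces to the algebraic assertion $R_1 - cR_2 \leq 0$ at points where $\abs{h}^2 + a = c\abs{H}^2$.

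To prove this pointwise reaction estimate I would decompose $h = \ho + \tfrac{1}{n}g\otimes H$, so that $\abs{h}^2 = \tfrac{1}{n}\abs{H}^2 + \abs{\ho}^2$, rewrite both $R_1$ and $R_2$ in this splitting, and then control the awkward normal-bundle contributions to $R_1$ by Cauchy--Schwarz applied simultaneously across the codimension directions (since the shape operators $h_\alpha$ cannot be simultaneously diagonalised). Combined with the saturation identity and the precise value $c = \tfrac{1}{n}+\tfrac{1}{3n}$, this should yield $R_1 - cR_2 \leq -\delta\abs{H}^4$ for some $\delta = \delta(a) > 0$. The main obstacle, and the reason the constant takes the peculiar form $\tfrac{4}{3n}$, is exactly this algebraic bound: the normal curvature creates a quartic contribution to $R_1$ that is absent in the hypersurface case, and $\tfrac{4}{3n}$ is the sharp pinching threshold at which the slack from $\abs{\ho}^2$, borrowed through the pinching identity, just suffices to absorb it.
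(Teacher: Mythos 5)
The paper does not actually prove this lemma: it is quoted verbatim from Andrews--Baker \cite{AB} and used as a black box, so there is no ``paper's own proof'' to compare against line by line. That said, your framework is exactly the one used in \cite{AB}: evolve $Q = \abs{h}^2 + a - c\abs{H}^2$, use the evolution equations $(\p_t-\Delta)\abs{h}^2 = -2\abs{\nabla h}^2 + 2R_1$ and $(\p_t-\Delta)\abs{H}^2 = -2\abs{\nabla H}^2 + 2R_2$, kill the gradient term with $\abs{\nabla h}^2\geq\tfrac{3}{n+2}\abs{\nabla H}^2$ (and your check that $\tfrac{4}{3n}\leq\tfrac{3}{n+2}$ for $n\geq 2$ is correct), and then argue that the reaction term is favourable.

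The genuine gap is the reaction estimate $R_1 - cR_2 < 0$ on the saturation set, which you only sketch; this is the entire content of the lemma, and the sketch as written would not close. The decomposition $h = \ho + \tfrac1n g\otimes H$ is too coarse: $R_2 = \abs{H}^2\sum_{i,j}\bigl(\textstyle\sum_\alpha H^\alpha h_{ij}^\alpha/\abs{H}\bigr)^2$ only sees the component of $h$ along the principal normal $\nu_1 = H/\abs{H}$, whereas $R_1$ (in particular the normal-curvature piece $\abs{\Rp}^2$) mixes the $\nu_1$-component with the orthogonal components. You therefore must split further, $\ho = \ho_1\otimes\nu_1 + \ho_-$, track $\abs{\ho_1}^2$ and $\abs{\ho_-}^2$ separately, and prove the sharp quartic bound on $\abs{\Rp}^2$ in terms of these two quantities (something of the form $\abs{\Rp}^2 \leq 2\abs{\ho_1}^2\abs{\ho_-}^2 + \text{(const)}\abs{\ho_-}^4$). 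Only then does the balance of quartic terms at the threshold $c=\tfrac{4}{3n}$, together with the saturation identity and $a>0$, yield strict negativity. ``Cauchy--Schwarz applied across the codimension directions'' on the coarse splitting loses exactly the information needed to distinguish the contributions to $R_1$ that $R_2$ can absorb from those it cannot. As written, the proposal is a correct plan with the central lemma asserted rather than proven.
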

The main theorems we present in this article use this Pinching Lemma in combination with blow-up arguments and other techniques introduced by Huisken for hypersurfaces to characterise the asymptotic shape of submanifolds evolving by the mean curvature flow as the first singular time is approached.  The main classification theorem we obtain is the following:
\begin{thm}\label{thm: general class}
Suppose that $F_{\infty} : M_{\infty}^n \times (-\infty, 0) \rightarrow \mathbb{R}^{n+k}$ arises as a blow-up limit of the mean curvature flow $F: M^n \times [0, T) \rightarrow \mathbb{R}^{n+k}$ about a special singular point.  If $F_0(M)$ satisfies $\abs{ H }_{ \text{min} } > 0$ and $\abs{ h }^2 \leq 4/(3n) \abs{ H }^2$, then $F_{\infty}(M_{\infty},-1/2)$ must be a sphere $\mathbb{S}^n(\sqrt{n})$ or one of the cylinders $\mathbb{S}^{m}(\sqrt{m}) \times \mathbb{R}^{n-m}$, where $1 \leq m \leq n-1$.
\end{thm}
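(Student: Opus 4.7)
The plan is to follow Huisken's classification of type I hypersurface singularities, augmented by a codimension-reduction step that exploits the strict pinching of Lemma \ref{l: preservation of pinching}. A suitable analogue of Huisken's monotonicity formula holds in arbitrary codimension, so the standard type I blow-up procedure about a special singular point produces a smooth, properly immersed limit $F_\infty$ that satisfies the self-shrinker equation
\[
H_\infty + \tfrac{1}{2}F_\infty^{\perp} = 0
\]
at $t = -1/2$.

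\textbf{Step 1: pinching and mean-convexity pass to the limit.} Since $\abs{h}^2 + a \leq c \abs{H}^2$ is scale-invariant under parabolic rescaling, Lemma \ref{l: preservation of pinching} transfers directly to the blow-up. Thus $F_\infty$ still satisfies $\abs{h}^2 \leq \tfrac{4}{3n}\abs{H}^2$; the hypothesis $|H|_{\min}>0$ together with the type I bound on $|H|$ also ensures that $H$ is nowhere vanishing on $F_\infty$, so that $H/|H|$ gives a globally smooth unit normal section.

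\textbf{Step 2: codimension reduction.} Decompose the second fundamental form as $h = \hat h + h^{\perp}$, where $\hat h$ is the component of $h$ in the line spanned by $H$ and $h^{\perp}$ takes values in the remaining normal directions. The goal is to prove that $h^{\perp}\equiv 0$ on $F_\infty$. I would construct a scale-invariant quantity of the form $\abs{h^{\perp}}^2/\abs{H}^{2}$ (possibly modified by a small power of $\abs{H}$ to absorb gradient terms), derive its parabolic evolution equation using the higher-codimension Simons identity developed by the author and Andrews, and show that the strict pinching constant $4/(3n)$ leaves a strictly negative reaction term dominating the normal-curvature cross terms. A maximum principle, combined if necessary with a Stampacchia iteration on the scale-invariant quantity, should then force $\abs{h^{\perp}}^2/\abs{H}^2 \to 0$ under type I rescaling. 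Once $h^{\perp}\equiv 0$, Erbacher's theorem on the constancy of the first normal space implies that $F_\infty(M_\infty)$ lies in an affine $(n+1)$-plane of $\mathbb{R}^{n+k}$.

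\textbf{Step 3: hypersurface classification and main obstacle.} With $F_\infty$ realised as a codimension-one self-shrinker with $|H|>0$ and inheriting the pinching, Huisken's rigidity result for mean-convex self-similar hypersurfaces identifies it as one of $\mathbb{S}^n(\sqrt{n})$ or $\mathbb{S}^m(\sqrt{m}) \times \mathbb{R}^{n-m}$, $1\leq m\leq n-1$, as claimed. The substantive difficulty is concentrated in Step 2: the exact scale-invariant combination of $\abs{h^{\perp}}^2$, $\abs{H}^2$ and $|\hat h|^2 - \tfrac{1}{n}|H|^2$ that yields a useful parabolic inequality must be found, and the algebra that converts $c \leq 1/n + 1/(3n)$ into a definite negative reaction term worked out carefully. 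The constant $4/(3n)$ in the hypothesis seems calibrated precisely so that this cancellation succeeds, and this is where the new high-codimension input beyond the classical Huisken argument is essential.
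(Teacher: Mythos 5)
Your strategy has the right ingredients, but you organise them as a two-step codimension reduction followed by Huisken's hypersurface classification, whereas the paper's argument is a one-step elliptic estimate that delivers both conclusions simultaneously.  Concretely, the paper works on the fixed slice $F_\infty(M_\infty,-1/2)$ satisfying $H = -F^{\perp}$, derives (via the higher-codimension Simons identity) the elliptic equation \eqref{e: compact h2 / H2} for $\abs{h}^2/\abs{H}^2$ --- not $\abs{h^{\perp}}^2/\abs{H}^2$ as you propose --- and then, in the noncompact case, multiplies by $\abs{h}^2\rho$ and integrates by parts against the Gaussian weight (following Huisken's \cite{Hu3}).  The pinching constant $4/(3n)$ together with the gradient estimate $\abs{\nabla h}^2 \geq \tfrac{3}{n+2}\abs{\nabla H}^2$ makes the resulting integral identity force $\nabla h \equiv 0$ \emph{and} $\abs{\ho_-}^2 \equiv 0$ at once; the latter is the codimension reduction, and the former, via Proposition \ref{prop: covariant const class} (Chern--do Carmo--Kobayashi/Lawson), gives the sphere/cylinder list directly, with no further appeal to Huisken's shrinker rigidity.

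Two points in your outline need repair.  First, you invoke ``a maximum principle, combined if necessary with a Stampacchia iteration,'' but $F_\infty(M_\infty,-1/2)$ is in general noncompact (the cylinder cases), so the strong maximum principle alone will not close the argument; a weighted integral inequality of the Huisken \cite{Hu3} type is the mechanism that actually replaces it, and this is a different tool than the Stampacchia iteration used in \cite{AB} to establish pinching decay along the original flow.  Second, your Step 3 appeals to ``Huisken's rigidity result for mean-convex self-similar hypersurfaces,'' but as stated (Theorem \ref{thm: Huisken classification}) that requires embeddedness of $M_0$, which is not available in high codimension --- the limit may be an immersed or multiplicity-$\geq 2$ hypersurface.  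The paper's route via $\nabla h = 0$ and Proposition \ref{prop: covariant const class} sidesteps this, since covariantly constant second fundamental form classifies the \emph{immersion} without any embeddedness input.
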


In \cite{AB} we show that if a submanifold satisfies a suitable pinching condition, then the mean curvature flow evolves the submanifold to round point in finite time. In this article we relax the pinching of the initial submanifold and seek to understand the asymptotic shape of the evolving submanifold as we approach the first singular time.  We still assume that $\abs{ H }$ is everywhere positive initially, however we no longer expect the entire submanifold to disappear at the maximal time.  In the case of mean-convex hypersurfaces, a classification of type I singularities was achieved by Huisken in \cite{Hu2} and \cite{Hu3}.  A key ingredient in this analysis was Huisken's monontoncity formula, introduced in \cite{Hu2}, which also holds in arbitrary codimension.  The singularities classified by Husiken in \cite{Hu2} are a special kind of type I singularity called a `special' type I singularity.  The more general kind of singularity is naturally called a `general' type I singularity and in order to have a complete understanding of type I singularity formation, we must also able to treat general singularities (definitions of the various kinds of singularities follow).  In the case of embedded hypersurfaces, the classification of general type I singularities is due to Stone \cite{St}.

Smoczyk (\cite{Sm}) has previously classified type I blow-up limits of the the mean curvature flow in high codimension under the assumption that the blow-up limits have flat normal bundle.  Although this curvature condition is in general much more restrictive than the pinching condition we work with, Smoczyk's classification includes products of Euclidean space with an Abresch-Langer curve, which also appear in the hypersurface classification, however they do not appear in our classification.  These spaces do not feature in our classification as they do not satisfy our pinching condition $\abs{ h }^2 \leq 4/(3n) \abs{H}^2$.  It's worthwhile to point out that the condition of having flat normal bundle is not preserved by the mean curvature flow.

This article is organised as follows.  We begin by recalling some basic facts about the mean curvature flow, in particular the notion of a self-similar solution.  In the third section we detail the blow-up argument we use in order to obtain the limit flow.  In Huisken's original papers he uses a continuous rescaling process to obtain a limit hypersurface, whereas we consider a sequence of rescaled flows and produce a limit flow.  We also give a new proof of Langer's compactness theorem for immersed submanifolds of arbitrary codimension using the using the well-known Cheeger-Gromov compactness theorem, and then use it prove a compactness theorem for mean curvature flows.  As will be immediately obvious, we have been heavily influenced by Hamilton's treatment of singularities in the Ricci flow, and in particular his compactness theorem for Ricci flows (\cite{Ham}).  In the latter sections we use Huisken's monotonicity formula and our compactness theorem for mean curvature flows to characterise the possible singular behaviour of the mean curvature flow, assuming the initial submanifold satisfies the curvature pinching condition of the Pinching Lemma.  An important detail of our classification theorem is the difference between special and general type I singularities, notions introduced by Andrew Stone in \cite{St}.  As embeddedness is not in general preserved in high codimension, we only obtain a classification of special type I singularities (achieved for hypersurfaces in \cite{Hu2} and \cite{Hu3}), and not of general type I singularities, which was successfully achieved by Stone for embedded hypersurfaces in \cite{St}.  In the last section, we use our compactness theorem for mean curvature flows to simplify the proof of the limiting spherical shape of the evolving submanifolds considered in \cite{Hu1} and \cite{AB}, and we also present a new estimate which simplifies the convergence arguments of the normalised flow given in \cite{Hu1}.

\section{Some basic facts about the mean curvature flow}
In this section we recall some some basic facts about the mean curvature flow. These results are well-known to researchers working in the field, however we give complete proofs to highlight why they are still true, or why they fail, in high codimension.

A self-similar solution of the mean curvature flow is defined to be one whose image moves only by scaling, that is a solution which moves by
\begin{equation}
	F(M, t) =  \lambda(t) F(M, t_1) \label{eqn: defn MCF soliton1},
\end{equation}
where $\lambda(t)$ is a time-dependent function and $t_1 \in [0, T)$ is some fixed time.  It is well-known that the image of the mean curvature flow is invariant under tangential reparametrisations, so more precisely we define a self-similar solution to be one that satisifes
\begin{equation}
	F(p,t) = \lambda(t) F( \phi(p,t), t_1) \label{eqn: defn MCF soliton2}
\end{equation}
where $\lambda(t)$ and $t_1$ are as before and $\phi(p,t) : M^n \times [0, T) \rightarrow M^n$ is a time-dependent family of diffeomorphisms.

We would like an equivalent characterisation of self-similar solutions in terms of the mean curvature vector.  By differentiating \eqref{eqn: defn MCF soliton2}
we obtain
\begin{equation}
	\frac{ \p }{ \p t }F(p,t) = \lambda'(t) F(\phi(p,t), t_1) + \lambda(t)\nabla_k F(\phi(p,t), t_1) \frac{d\phi(p,t)}{dt}. \label{eqn: ss1}
\end{equation}
The normal and tangential components are given by
\begin{align*}
	&\left( \frac{ \p }{ \p t }F(p,t) \right)^{\perp} = \lambda'(t) F^{\perp}(\phi(p,t), t_1) \\
	&\left( \frac{ \p }{ \p t}F(p,t) \right)^{\top} = \lambda'(t) F^{\top}(\phi(p,t), t_1) + \lambda(t)\nabla_k F(\phi(p,t), t_1) \frac{d\phi^k(p,t)}{dt}.
\end{align*}
The immersion $F_t(M)$ satisfies the mean curvature flow, so the tangential component must be equal to zero.  This prescribes an ODE for $\phi(t)$, and since $M$ is closed, standard ODE techniques guarantee a solution exists.  Since $F_t(M)$ solves the mean curvature flow, the normal component satisfies
\begin{equation}
	H(p,t) = \lambda'(t) F^{\perp}(\phi(p,t), t_1). \label{eqn: ss2}
\end{equation}
The mean curvature scales like $H(p,t) = H(\phi(p,t),t_1)/\lambda(t)$, then substituting this into \eqref{eqn: ss2} we get
\begin{equation*}
	\lambda(t)\lambda'(t) = \frac{ H(\phi(p,t), t_1) }{ F^{\perp}(\phi(p,t), t_1) }.
\end{equation*}
This can be re-written as
\[ \frac{ d }{ dt } \lambda(t)^2 = \frac{ 2H(\phi(p,t), t_1) }{ F^{\perp}(\phi(p,t), t_1) }, \]
then integrating with the initial condition $\lambda(t_1) = 1$ we obtain
\begin{equation}
	 \lambda(t) = \sqrt{ 1 + \int_{t_1}^t \frac{ 2H }{ F^{\perp} }(\phi(p,\tau), t_1) \, d\tau }. \label{eqn: ss3}
\end{equation}
The left hand side is independent of the point $p$.  Thus, for any points $p_1, p_2 \in M$ we have
\[ \int_{t_1}^t \frac{ 2H }{ F^{\perp} }(\phi(p_1,\tau), t_1) = \int_{t_1}^t \frac{ 2H }{ F^{\perp} }(\phi(p_2,\tau), t_1). \]
After differentiating with respect to $t$ we deduce that
\[ \frac{ H(\phi(p,t), t_1) }{ F^{\perp}(\phi(p,t), t_1) } = \frac{ \alpha }{ 2 } \]
for all $p \in M$, where $\alpha$ is a constant (we choose $\alpha/2$) to agree with the presentation in \cite{Eck}).  Substituting this back into \eqref{eqn: ss3} we get
\begin{equation*}
	 \lambda(t) = \sqrt{ 1 + \alpha(t - t_1) }
\end{equation*}
for all $t$ satisfying $1 + \alpha(t - t_1) \geq 0$.  Differentiating this formula we get $\lambda'(t) = -\alpha/(2\lambda(t))$, then combining this with \eqref{eqn: ss2} and \eqref{eqn: defn MCF soliton1} we find
\begin{equation}
	H(p,t) = \frac{ \alpha }{ 2 \lambda^2(t) } F^{\perp}(p,t)
\end{equation}
This equation describes shrinking solitons for $\alpha < 0$ and expanding solitons for $\alpha > 0$.  Here we consider the case $\alpha < 0$.  To make contact with our blow-up construction that appears later, we assume that the solution is defined for $t \in (-\infty, 0)$ and that it shrinks to a point at $t=0$.  This leads us to set $\alpha = 1/t_1$ (this is negative because $t$ is a backwards time scale), then $\lambda(t) = \sqrt{t/t_1}$ and equation \eqref{eqn: ss2} becomes
\begin{equation}
	H(p,t) = \frac{1}{2t}F^{\perp}(p,t). \label{eqn: MCF soliton}
\end{equation}
In particular, if we take $t_1 = -1$, then equation \eqref{eqn: defn MCF soliton1} is simply
\[ F_t(M) = \sqrt{-t}F_{-1}(M), \]
and at $t=-1$ equation \eqref{eqn: MCF soliton} is
\[ H(p,t) = \frac{-1}{2}F^{\perp}(p,t). \]
We have just shown that a self-similar solution of the mean curvature flow satisifies equation \eqref{eqn: MCF soliton}.  The converse is also easily shown to be true.  To this end, assume that a solution $F$ of the mean curvature flow satisfies equation \eqref{eqn: MCF soliton}.  For the moment, assume that there exists a time dependent family of diffeomorphisms $\varphi_t : M \times [0,T) \rightarrow M$.  We wish to show that
\begin{equation}\label{eqn: ss converse}
	\frac{ \p }{ \p t } \left( \frac{ F( \phi(p,t), t ) }{ \sqrt{ -t } } \right) = 0.
\end{equation}
We compute
\begin{equation*}
	\frac{ \p }{ \p t } \left( \frac{ F( \phi(p,t), t ) }{ \sqrt{ -t } } \right) =  \frac{ 1 }{ \sqrt{-t} } \left(  \frac{ \p F }{ \p t }( \phi(p,t), t ) + \nabla_k F( \phi(p,t), t ) \frac{ d\phi^k(p) }{ dt } \right) + \frac{ F( \phi(p,t),t ) }{ (-s)^{3/2} }.
\end{equation*}
Resolving this equation into tangential and normal components and equating them both to zero, we find the tangential components satisfy the ODE
\[ \left( \frac{ \p }{ \p t } F( \phi(p,t), t ) \right)^{ \top } + \nabla_k F( \phi(p,t), t ) \frac{ d\phi^k(p) }{ dt }  + \frac{ F( \phi(p,t),t ) }{ 2t } = 0,
\]
which is again solvable by standard ODE techniques.  Since, by assumption, $F$ satisfies
\[ H( \phi(p,t),t) = \frac{1}{2t}F^{\perp}( \phi(p,t),t) ), \]
the normal components also sum to zero, thus confirming equation \eqref{eqn: ss converse}.  Integrating \eqref{eqn: ss converse} with $F_{-1}(M)$ as the initial condition gives
\[ F_t(M) = \sqrt{-t} F_{-1}(M) \]
as desired.  We collect the above into the following proposition:
\begin{prop}
Let $F : M^n \times (-\infty, 0) \rightarrow \mathbb{R}^{n+k}$ be a solution of the mean curvature flow.  Then $F_t(M^n)$ is self-similar, that is $F_t(M) = \sqrt{-t}F_{-1}(M)$, if and only if $F_t(M)$ satisfies
\[ H(p,t) = \frac{1}{2t}F^{\perp} (p,t) \]
for all $t \in (-\infty, 0)$.
\end{prop}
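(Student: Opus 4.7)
The plan is to prove the equivalence in two directions, essentially by organising the calculations carried out in the narrative above into a clean if-and-only-if statement.

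For the forward implication, I would start from the self-similarity relation $F_t(M) = \sqrt{-t}F_{-1}(M)$, which is equivalent to the existence of a time-dependent family of diffeomorphisms $\phi_t : M \to M$ satisfying $F(p,t) = \sqrt{-t}\,F(\phi(p,t),-1)$. Differentiating in $t$ and equating with $\partial_t F = H(p,t)$ yields an identity whose tangential part determines $\phi$ via an ODE (solvable globally since $M$ is closed), and whose normal part reduces directly to $H(p,t) = \tfrac{1}{2t}F^\perp(p,t)$. This matches equation \eqref{eqn: MCF soliton} after setting $t_1=-1$ in the derivation above.

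For the converse, the plan is to show that for some suitable family of diffeomorphisms $\phi_t$, the map $G(p,t) := F(\phi(p,t),t)/\sqrt{-t}$ is independent of $t$. First I would \emph{define} $\phi_t$ as the flow of the time-dependent vector field chosen so that the tangential component of $\partial_t G$ vanishes; as before, the compactness of $M$ guarantees a global solution. The remaining task is to verify that the normal component of $\partial_t G$ also vanishes. Here $\partial_t F$ contributes $H$ normally, while differentiation of the scaling factor contributes a term proportional to $F^\perp/t$; these cancel precisely by the hypothesised identity $H = \tfrac{1}{2t}F^\perp$. Thus $G(p,t) = G(p,-1)$, so $F(\phi(p,t),t) = \sqrt{-t}\,F(\phi(p,-1),-1)$, and since $\phi(\cdot,-1)$ is a diffeomorphism of $M$ the images satisfy $F_t(M) = \sqrt{-t}\,F_{-1}(M)$.

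The main obstacle is purely bookkeeping: one must set up $\phi_t$ as the flow of the correct time-dependent vector field and check that the tangential and normal decompositions line up as claimed. Since all the substantive identities have already appeared in the discussion preceding the proposition, the only real content of the proof is observing that the tangential ODE is globally solvable on the closed manifold $M$ and that the normal component equation is the hypothesis itself.
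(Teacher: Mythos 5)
Your proposal is correct and follows essentially the same two-direction argument the paper gives: differentiate the self-similarity relation, use the tangential ODE to choose the reparametrisation $\phi_t$ (globally solvable since $M$ is closed), and match the normal component with the soliton equation $H = \tfrac{1}{2t}F^\perp$. The only minor difference is that you streamline the forward direction by taking the scaling factor $\sqrt{-t}$ directly from the proposition statement, whereas the paper's preceding discussion first derives the general form $\lambda(t)=\sqrt{1+\alpha(t-t_1)}$ before specialising; this is a harmless simplification.
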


We are not able to make Stone's argument in \cite{St} work in high codimension, essentially because embeddedness is not in general preserved in high codimension, and therefore the limit flow could have multiplicity greater than one.  Let us see why embeddedness of hypersurfaces is preserved, and why the proof breaks down in higher codimension.

\begin{prop}\label{prop: preservation of embeddedness}
Suppose $F_t(M^n)$ is a hypersurface moving by the mean curvature flow in Euclidean space.  If $F_0(M)$ is embedded, then $F_t(M)$ remains embedded for as long as the flow is defined.
\end{prop}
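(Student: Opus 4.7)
The plan is to argue by contradiction using the strong parabolic maximum principle at a first self-intersection time. Suppose there is a smallest $t_0 \in (0,T)$ at which $F_{t_0}$ fails to be injective; such a $t_0$ exists because $F_0$ is an embedding, $F$ depends smoothly on $t$, and any immersion is locally an embedding, so the infimum of ``bad'' times cannot be approached by contacts collapsing onto the diagonal of $M\times M$. Let $p_0 \neq q_0$ satisfy $F(p_0,t_0) = F(q_0,t_0) =: x_0$. First I would show that the tangent hyperplanes of the two sheets at $x_0$ must coincide: since in codimension one any two distinct tangent hyperplanes are automatically transverse, the implicit function theorem applied to $(x,y,t)\mapsto f(x,t)-g(y,t)$ (with $f,g$ local parametrisations near $p_0,q_0$) would otherwise produce a non-empty self-intersection locus for $t$ slightly smaller than $t_0$, contradicting minimality.

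With the tangent hyperplanes agreeing, both sheets can be written locally as scalar graphs $u^{\pm}(x,t)$ over a common disc $U$ in $T_{x_0}F_{t_0}(M)$, with $u^{\pm}(x_0,t_0)=0$ and $Du^{\pm}(x_0,t_0)=0$. Minimality of $t_0$ forces, after relabelling, $u^+ \geq u^-$ on $U\times[t_0-\varepsilon,t_0]$. Each $u^{\pm}$ satisfies the non-parametric mean curvature equation
\[ \frac{\p u}{\p t} = \sqrt{1+|Du|^2}\,\mathrm{div}\!\left(\frac{Du}{\sqrt{1+|Du|^2}}\right), \]
and direct subtraction produces a linear uniformly parabolic equation $\p_t w = a^{ij}\p_i\p_j w + b^i\p_i w$ for $w = u^+ - u^- \geq 0$, with coefficients smoothly determined by $Du^{\pm}$. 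Since $w\geq 0$ attains the value $0$ at the interior point $(x_0,t_0)$, the strong parabolic maximum principle forces $w\equiv 0$ on a parabolic neighbourhood, so the two sheets coincide on an open piece of $F_{t_0}(M)$.

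An open--closed connectedness argument then globalises the coincidence to a non-trivial diffeomorphism $\sigma:M\to M$ with $F_{t_0}\circ\sigma = F_{t_0}$. By equivariance of the mean curvature flow under domain diffeomorphisms, $F_t$ and $F_t\circ\sigma$ are two smooth MCF solutions agreeing at $t_0$; invoking backward uniqueness for the quasilinear parabolic system yields $F_0\circ\sigma = F_0$, contradicting the embeddedness of $F_0$. The main obstacle, and exactly where the argument collapses in codimension $k\geq 2$, is the initial tangent-plane dichotomy: two $n$-planes in $\mathbb{R}^{n+k}$ can fail simultaneously to agree and to be transverse, so persistence-of-transversality gives no information, and even when the tangent planes do coincide, the vector-valued normal displacement between the two sheets has no canonical sign and therefore supplies no scalar function $w\geq 0$ on which the strong maximum principle can operate. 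This is precisely why the subsequent classification of type I singularities will be restricted to the \emph{special} case.
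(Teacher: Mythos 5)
Your proof is correct in outline but takes a genuinely different route from the paper's.

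The paper works globally: it considers the extrinsic distance function $d : M \times M \times [0,T] \to \mathbb{R}$, uses the second-fundamental-form bound to show $d > 0$ away from the diagonal in a small strip (a quantitative tubular-neighbourhood estimate), and then applies the maximum principle directly to $d^2$ on the complement of that strip. The key codimension-one observation is that at a spatial minimum of $d$ the two tangent hyperplanes are parallel, so one may choose parallel coordinates and obtain the favourable cross-derivative contribution $4n$, which makes the parabolic inequality for $d^2$ close up and yields that $\min d$ is non-decreasing. Your proof instead localises at a putative first self-intersection time $t_0$, rules out transverse tangent planes by persistence of transversality, writes the two sheets as ordered scalar graphs $u^\pm$, and applies the strong parabolic maximum principle to $w = u^+ - u^-$. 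Both arguments correctly isolate the same codimension-one ingredient — a forced alignment of tangent planes that gives a sign (respectively, a favourable cross term) — and both correctly explain why the argument fails in higher codimension. The paper's version is more quantitative (it bounds $\min d$ from below at all times) while yours is more local and perhaps more transparent about where the contradiction comes from.

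Two points in your write-up deserve more care. First, the existence of a smallest bad time $t_0$ is not a formality: to rule out self-contacts collapsing onto the diagonal of $M \times M$ you need a tubular-neighbourhood radius that is \emph{uniform in $t$}, and ``any immersion is locally an embedding'' is not enough by itself; this uniformity comes from the bound on $\abs{h}$ on the closed time interval, which is exactly what the paper's strip construction encodes. You should make that curvature dependence explicit. Second, the final globalisation via a diffeomorphism $\sigma$ and backward uniqueness for the quasilinear system is both an overreach and unnecessary: backward uniqueness for mean curvature flow is a nontrivial theorem in its own right, and you do not need it. Since the strong maximum principle on the backward parabolic cylinder already forces $w \equiv 0$ for all $t \in [t_0 - \varepsilon, t_0]$, the two sheets coincide at some time strictly before $t_0$, so $F_{t_0 - \varepsilon}$ already fails to be injective, directly contradicting the minimality of $t_0$. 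Replacing the backward-uniqueness step with this observation closes the argument cleanly.
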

\begin{proof}
The mean curvature flow of submanifolds of any codimension, and in particular hypersurfaces, has a solution for as long as the second fundamental form remains bounded.  Hence, we must show that if a solution to the mean curvature flow exists on the closed interval $[0, T]$ (and therefore $\max_{p \in M} \abs{ h }^2(p,t) \leq C$ for all $t \in [0,T]$ by what we have just said) and it is initially embedded, then the solution remains embedded on the same closed time interval.  We work with the extrinsic distance function
\[ d : M \times M \times [0,T] \to \mathbb{R}. \]
If $M_0$ is embedded, then clearly $d >0$.  The idea of the proof is to use the maximum principle to show that $d$ is non-decreasing along the flow.  The small problem with doing this is that $d(p,p) =0$.  To overcome this problem we first remove a small open strip $S$ about the diagonal.  We shall see the curvature bound can be used to show that $d \geq 0$ in $S$ with equality only on the diagonal, and then the maximum principle can be used to show that $d$ is non-decreasing on $(M \times M \times [0,T]) \setminus S$.

Let $p, q \in M$ and $\gamma(s)$ be an arc-length parametrised length minimising geodesic connecting $p$ to $q$.  Denote by $l$ the intrinsic length of $\gamma$ and write $\dot \gamma$ for $d\gamma / d s$.  From the curvature bound we have
\[ \abs{ \ddot \gamma(s) } = \abs{ h(\dot \gamma, \dot \gamma) } \leq C. \]
Since $\gamma$ is unit speed, $\abs{ \dot \gamma(s) }$ =1, so by the Cauchy-Schwarz inequality
\[ \left| \frac{ d }{ ds } \langle \dot \gamma(s), \dot \gamma(0) \rangle \right| \leq \abs{ \ddot \gamma(s) } \abs{ \gamma(0) } \leq C. \]
We compute
\begin{align*}
	\abs{ \langle \dot \gamma(s), \dot \gamma(0) \rangle - \langle \dot \gamma(0), \dot \gamma(0) \rangle } &= \left| \in0^s \frac{ d }{ d\sigma } \langle \dot \gamma(\sigma), \dot \gamma(0) \rangle \, d\sigma \right| \\
	&\leq Cs \leq Cl,
\end{align*}
so if $l \leq 1/(2C)$, then it follows that
\[ \langle \dot \gamma(s), \dot \gamma(0) \rangle \geq 1/2, \]
where we have used $\langle \dot \gamma(0), \dot \gamma(0) \rangle =1$.  Another use of the Cauchy-Schwarz inequality shows
\begin{align*}
	d &= \abs{ q - p } \geq \abs{ \langle q-p \rangle, \dot \gamma(0) } \\
	&= \left| \int_{ \gamma } \langle \dot \gamma(s), \dot \gamma(0) \rangle ds \right| \geq \frac12.
\end{align*}
Thus on the strip
\[ S = \{ (p,q,t) \in M \times M \times [0,T] : l(p,q,t) < \frac1{2C} \}, \]
$d \geq 0$ with equality if and only if $p=q$.  The parabolic boundary of the complementary domain $U = (M \times M \times [0,T])$ is
\[ \p_p U = \p U_1 \cup \p U_2, \]
where
\[ \p U_1= \{ (p,q,t) \in M \times M \times (0,T) : l(p,q,t) = \frac1{2C} \} \]
and
\[ \p U_2 = \{ (p,q,t) \in M \times M \times \{ 0 \} : l(p,q,t) \geq \frac1{2C} \}. \]
Since $F_0(M)$ is embedded and $M$ is closed, $d \geq d_0 >0$ initially.  On the set  $\p U_1$, $d = 1/(2C)$  by the above construction of the set $U$, and on $\p U_2$, $d \geq d_0$.  We now show that $d$ is non-decreasing on $U$, which completes the proof.  We point out that up to this point, the argument works for submanifolds of any codimension.

To simplify the computations, we work with the square of the distance function.  A minimum of $d$ is obviously also a minimum of $d^2$.  To apply the maximum principle, we first need to compute the first, second and time derivates of $d^2$ in some choice of local coordinates $\{ p^i \}$ near $p$ and $\{ q^i \}$ near $q$.  The first derivatives are
\begin{gather*}
	\p_{ q^j } d^2 = 2 \langle F(q) - F(p), \p_{q^j}F \rangle \\
	\p_{ p^j } d^2 = -2 \langle F(q) - F(p), \p_{p^j}F \rangle,
\end{gather*}
the second derivatives
\begin{gather*}
	\p_{ q^i } \p_{ q^j } d^2 = 2 g_{ij}^q - 2 \langle F(q) - F(p), h_{ij}^q \nu_q \rangle \\
	\p_{ p^i } \p_{ p^j } d^2 = 2 g_{ij}^p + 2 \langle F(q) - F(p),  h_{ij}^p \nu_p \rangle  \\
	\p_{ p^i } \p_{ q^j } d^2 = -2 \langle \p_{p^i}F, \p_{q^j}F \rangle,
\end{gather*}
and time derivative is
\begin{equation*}
	\p_t d^2 = 2 \langle F(q) - F(p), - H_q \nu_q + H_p \nu_p \rangle.
\end{equation*}
Here is the crucial point:  For a hypersurface, at a minimum of the distance function the tangent planes at $p$ and $q$ are parallel to each other.  We may therefore choose local coordinates such that  $\{ p^i \}$ and $\{ q^i \}$ are parallel for each $i$.  We now compute
 \begin{align*}
 	\frac{ \p d^2 }{ \p t } &- \left( g^{ij}_p \frac{ \p^2 d^2 }{ \p p^i \p p^j } + g^{ij}_q \frac{ \p^2 d^2 }{ \p q^i \p q^j } + 2g_p^{ik} g_q^{jl} \langle \p_{ p^k }F, \p_{ q^l }F \rangle\frac{ \p^2 d^2 }{ \p p^i \p q^j } \right) \\
	&= 2 \langle F(q) - F(p), -H_q \nu_q + H_p \nu_p \rangle - 2n - 2 \langle F(q) - F(p), H_p \nu_p \rangle - 2n \\
	&\quad + 2 \langle F(q) - F(p), H_q \nu_q \rangle + 4n \\
	&=0.
\end{align*}
We conclude by the maximum principle that $d^2$ is non-decreasing in time, which also completes the proof of the proposition.
\end{proof}
In the above proof it was crucial that we were able to choose parallel local coordiantes at the points $p$ and $q$ : without the good $4n$ contribution from the cross-derivative terms the proof does not work.  In high codimension this is not possible to do in general since the tangent planes could easily be orthogonal to each other at a point of minimum distance, which results in zero contribution from the cross-terms.

\begin{prop}
Let $F : M^n \rightarrow \mathbb{R}^{n+k}$ be an immersion of a complete, not necessarily compact, manifold $M$ and equip $M$ with the induced metric.  If the immersion satisfies $\sup_{p \in M} \abs{ h }^2 \leq C$, where $C$ is a uniform constant, then the injectivity radius of $M$ is uniformly bounded below, namely $inj_g \geq \delta >0$.
\end{prop}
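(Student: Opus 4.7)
The plan is to bound the injectivity radius from below by combining a curvature-based lower bound on the conjugate radius with a lower bound on the length of the shortest geodesic loop at each point, where the second is obtained by essentially the same Cauchy-Schwarz argument already used in the proof of Proposition \ref{prop: preservation of embeddedness}.

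The first step is intrinsic: by the Gauss equation, the sectional curvature of $M$ in its induced metric is, for orthonormal $X,Y$,
\[
K(X,Y) = \langle h(X,X), h(Y,Y)\rangle - \abs{h(X,Y)}^2,
\]
so $\abs{K} \leq C'$ for some $C'$ depending only on $C$. Rauch's comparison theorem then yields a uniform lower bound $\operatorname{conj}(p) \geq \pi/\sqrt{C'}$ on the conjugate radius at every $p \in M$.

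The second step is to rule out short geodesic loops. I claim that no geodesic loop of length $L \leq 1/(2\sqrt{C})$ exists at any point of $M$. Indeed, let $\gamma : [0,L] \to M$ be an arc-length parametrised geodesic with $\gamma(0) = \gamma(L) = p$. Since $\abs{\ddot\gamma(s)} = \abs{h(\dot\gamma,\dot\gamma)} \leq \sqrt{C}$ and $\abs{\dot\gamma(0)} = 1$, the same Cauchy-Schwarz argument as in the proof of Proposition \ref{prop: preservation of embeddedness} gives $\langle \dot\gamma(s), \dot\gamma(0)\rangle \geq 1/2$ throughout $[0,L]$. Pairing the identity $F(\gamma(L)) - F(\gamma(0)) = \int_0^L dF(\dot\gamma(s))\,ds$ with $dF(\dot\gamma(0))$ and using that $F$ is isometric, one obtains
\[
0 = \bigl\langle F(\gamma(L)) - F(\gamma(0)),\, dF(\dot\gamma(0))\bigr\rangle = \int_0^L \langle \dot\gamma(s), \dot\gamma(0)\rangle\, ds \geq L/2,
\]
forcing $L = 0$, a contradiction.

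The proof is then completed via Klingenberg's lemma: on a complete manifold, whenever $\inj(p) < \operatorname{conj}(p)$ there must exist a geodesic loop at $p$ of length exactly $2\inj(p)$. Combining this with the two previous estimates gives $\inj(p) \geq \delta := \min\bigl(\pi/\sqrt{C'},\, 1/(4\sqrt{C})\bigr) > 0$, uniformly in $p$. The one point deserving attention is the application of Klingenberg's lemma in the complete but possibly non-compact setting, since the classical proof uses that the infimum defining $\inj(p)$ is attained; this is handled by Hopf-Rinow, which makes closed metric balls compact and so forces a nearest cut point of $p$ to exist whenever $\inj(p)$ is finite.
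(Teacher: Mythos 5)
Your argument is correct and takes essentially the same route as the paper: Klingenberg's lemma, with the Gauss equation supplying the sectional curvature bound and the second-fundamental-form bound excluding short geodesic loops. The paper dispatches the loop-length bound in a single sentence, asserting that the curvature of a geodesic loop is controlled by $h$; your Cauchy--Schwarz and integration argument (modelled on the proof of Proposition \ref{prop: preservation of embeddedness}) supplies exactly the detail the paper takes for granted.
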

\begin{proof}
From Klingerberg's Lemma (see, for example, \cite{Pet}), in order to control the injectivity radius from below, we need to have control on the maximum of the intrinsic sectional curvature and the length of the smallest closed geodesic loop.  By the Gauss relation, an upper bound on the second fundamental form gives an upper bound on the intrinsic sectional curvature of $M$.  The curvature of a small geodesic loop is simply an entry in the second fundamental form, so the upper bound on the second fundamental form immediately implies a lower bound on the length of the smallest possible geodesic loop.
\end{proof}
We see that control of the injectivity radius in the mean curvature flow is essentially for free.

\section{The blow-up construction and a compactness theorem for immersed submanifolds}
We remind the reader that $M$ is a fixed manifold, and that $M_t := F_t(M)$ refers to the immersed submanifold.   For a function $f \in C^{\infty}(\mathbb{R}^{n+k} \times \mathbb{R})$ defined on the ambient space, we follow standard abuse of notation and write
\[ \int_{M} f(F(p)) \, d\mu_g(p) = \int_{M} f(p)\, d\mu_g(p). \]
Integration over the manifold $M$ with respect to $d\mu_g$ and integration over the image $F(M)$ in $\mathbb{R}^{n+k}$ are linked by the area formula.  We denote the pushforward measure by $\mu = F(\mu_g)$, where for $U \in \mathbb{R}^{n+k}$ an open set, $\mu(U) := \mu_g(F^{-1}(U))$.  In order for the pushforward measure $\mu$ to be a Radon measure, the immersion $F$ must be a proper immersion.  Recall an immersion is proper if the inverse image of a compact set is also compact.  The area formula relates the induced measure on $M$ to the Hausdorff measure on $\mathbb{R}^{n+k}$ restricted to the image $F(M)$ of the immersion.  We denote Hausdorff measure on the ambient space by $d\mathcal{H}^{n+k}$ or simply by $d\mathcal{H}$.  For a $\mu_g$-measurable function $f : M \rightarrow \mathbb{R}$, by the area formula we have
\[ \int_{M} f(p) \, d\mu_g(p) = \int_{\mathbb{R}^{n+k} } \left( \sum_{p \in F^{-1}\{x\} } f(p) \right) \, d\mathcal{H}^{n+k}(x). \]
Choosing $f = \chi_{[F^{-1}(F(M))]}$ gives
\[ \int_{M} d\mu_g(p) = \int_{F(M)} \left( \sum_{p \in F^{-1}\{x\} } \right) \, d\mathcal{H}^{n+k}(x). \]
Thus, denoting the multiplicity function by $\theta$, we have $\mu = \theta \mathcal{H} \llcorner F(M)$.  In particular, if $F : M^n \rightarrow \mathbb{R}^{n+k}$ is a properly embedded submanifold, then $\theta \equiv 1$ and
\[ \int_{M} d\mu_g(p) = \int_{F(M)} d\mathcal{H}^{n+k}(x). \]

 For more details on Hausdorff measure and the area formula we refer the reader to \cite{EG}.  One final piece of notation before getting underway, for a point $p \in M$, we put $\lim_{t\rightarrow T} F(p) := \hat p \in \mathbb{R}^{n+k}$.

In order to study the asymptotic shape of the evolving submanifold $F_t(M)$ around a singular point as the first singular time is approached, we progressively `magnify' the solution around this point by considering a sequence of rescaled flows.  The limit of such rescaled flows is called a blow-up limit.  Our first task is to show how to obtain such a limit.  In order to obtain a smooth blow-up, we assume that the submanifold is developing a so-called type I singularity.  This imposes a natural maximum rate at which the singularity can develop, which then enables us to rescale at a rate that keeps the maximum curvature of the rescaled solution bounded.

Let $F : M^n \times [0, T) \rightarrow \mathbb{R}^{n+k}$ be a submanifold evolving by the mean curvature flow.  We say the submanifold is developing a type I singularity at time $T$ if there exists a constant $C_0 \geq 1$ such that
\begin{equation*}
	\max_{p \in M} \abs{ h }^2(p,t) \leq \frac{ C_0 }{ 2(T - t) }.
\end{equation*}
If no such constant exists, that is 
\[ \limsup_{t \rightarrow T} \max_{p \in M} \abs{ h }^2(p,t) (T-t), \]
then we say the submanifold is developing a type II singularity as $t \rightarrow T$.  We shall mainly be concerned with type I singularity formation, although we will work with type II singularities in the final section.  The blow-up rate of any singularity also satisfies the lower bound
\begin{equation*}
	\max_{ p \in M } \abs{ h }^2(p,t) \geq \frac{ 1 }{ 2(T-t) }
\end{equation*}
(see \cite{Hu2} or \cite{Man}), so a type I singularity satifies
\begin{equation*}
	\frac{1}{2(T-t)} \leq \max_{p \in M} \abs{ h }^2(p,t) \leq \frac{ C_0 }{ 2(T - t) }.
\end{equation*}

Let $q \in M$ be a fixed point and assume that the type I condition holds.  We want to rescale the solution around the point $\hat{q} \in \mathbb{R}^{n+k}$ by remaining time.  Let $(t_k)_{ k \in \mathbb{N} }$ be any sequence of times such that $t_k \rightarrow T$ as $k \rightarrow \infty$.  For example, we could take $t_k := T - 1/k$.  To rescale by remaining time we define set scale $\lambda_k := 1/\sqrt{2(T - t_k)}$.  We then define a sequence of parabolically rescaled flows
\begin{equation}\label{eqn: rescaled flows}
	F_k(p, s):= \lambda_k \big( F(p, T + s/\lambda_k^2)  - \hat q \big).
\end{equation}
Then for each $k$, $F_k : M \times [-\lambda_k^2T, 0)$ is a solution to the mean curvature flow (in the time variable $s$) that exists on the time interval $s \in [-\lambda_k^2 T, 0)$.  Under our parabolic rescaling the second fundamental form rescales like $\abs{ h }^2_k = \abs{ h }^2/ \lambda_k^2$, so using the type I hypothesis
\begin{align*}
	\abs{ h }^2_k(p,s) &= \frac{ \abs{ h }^2(p, T + s/\lambda_k^2) }{ \lambda_k^2 } \\
	&\leq 2(T - t_k) \cdot \frac{ C_0 }{ 2(T - T - s/\lambda_k^2) } \\
	&= \frac{ - C_0 }{ 2s }
\end{align*}
which holds on $s \in [-\lambda_k^2 T, 0)$.  Consequently, on the time intervals $I_k := (-\lambda_k^2T, 1/k)$, the rescaled flows have bounded second fundamental form.  Next we would like to use a compactness theorem for immersed submanifolds in order to obtain a limit flow.  The compactness theorem usually quoted in this context is \cite{Lan}.  The result presented in \cite{Lan} is for a sequence of two-surfaces of Euclidean three-space with $L^p$-bounded second fundamental form and a global area bound, whereas we need to apply the result to a sequence of $n$-dimensional submanifolds of codimension $k$ in the presence of bounds on all higher derivatives of the second fundamental form and only a local area bound.  Very recently we learned that in his PhD thesis Patrick Breuning has extended Langer's result to submanifolds of arbitrary codimension in the presence of a local area bound \cite{Br}.  We record Breuning's compactness theorem as follows:
\begin{thm}[Breuning-Langer compactness theorem for immersed submanifolds]
Let  $F_k : M_k^n \rightarrow \mathbb{R}^N$ be a sequence of proper immersions, where $M_k$ is a $n$-manifold without boundary and $p \in F_k(M_k)$.  Assume the following conditions are satisfied:
\begin{enumerate}
 	\item Uniform curvature derivative bounds: \\
	For each $k \in \mathbb{N}$,  for every $m \in \mathbb{N}$ there exists a constant $C_m(R)$ depending on $m$ and $R$ such that $\abs{ \nabla_k^m h_k}_{ F_k } \leq C_m$.
	\item Local area bound: \\
	For every $R > 0$ there exists a constant $C_R$ depending on R such that $\mu^k(B_R) \leq C_R$.
\end{enumerate}
Then there exists a proper immersion $F_{\infty} : M_{\infty} \rightarrow \mathbb{R}^{n+k}$, where $M_{\infty}$ is again a $n$-manifold without boundary, such that after passing to a subsequence there exists a sequence of diffeomorphisms $\phi_k : U_k \rightarrow (F_k)^{-1}(B_k) \subset M_k$, where $U_k \subset M_{\infty}$ are open sets with $U_k \subset\subset U_{k+1}$ and $M_{\infty} = \bigcup_{j=1}^{\infty} U_j$ such that $\phi_k^*F_k |_{U_j}$ converges in $C^{\infty}( U_j, \mathbb{R}^N)$ to $F_{\infty} |_{U_j}$.
\end{thm}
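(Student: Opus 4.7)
The plan is to reduce the statement to the pointed Cheeger--Gromov compactness theorem for Riemannian manifolds by passing to the induced metrics $g_k = F_k^*\langle\cdot,\cdot\rangle_{\mathbb{R}^N}$, and then to recover the immersions themselves via a pullback plus Arzel\`a--Ascoli argument. The local area bound will enter only at the very last step, to secure properness of the limit immersion.

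First I would verify the hypotheses of Cheeger--Gromov for the sequence of pointed Riemannian manifolds $(M_k, g_k, q_k)$, where $q_k$ is any chosen preimage in $F_k^{-1}(p)$. The Gauss equation expresses the intrinsic Riemann tensor of $g_k$ as a universal quadratic polynomial in $h_k$, and its iterated covariant derivatives as universal polynomials in $\nabla^j h_k$ for $j \leq m$; the assumed derivative bounds therefore control $\nabla^m \mathrm{Rm}_{g_k}$ uniformly for every $m$. The preceding proposition supplies the uniform injectivity-radius lower bound. Passing to a subsequence, pointed Cheeger--Gromov yields a complete limit $(M_\infty, g_\infty, q_\infty)$ together with an exhaustion $U_1 \subset\subset U_2 \subset\subset \cdots$ of $M_\infty$ and smooth diffeomorphisms $\psi_k : U_k \to \psi_k(U_k) \subset M_k$ with $\psi_k(q_\infty) = q_k$ and $\psi_k^* g_k \to g_\infty$ in $C^\infty_{\mathrm{loc}}(M_\infty)$.

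Next I would study the pullback immersions $\tilde F_k := F_k \circ \psi_k - p : U_k \to \mathbb{R}^N$, which satisfy $\tilde F_k(q_\infty) = 0$. The metric identity $(\tilde F_k)^*\langle\cdot,\cdot\rangle = \psi_k^* g_k$ gives uniform bounds on the first derivatives of $\tilde F_k$ in any fixed chart on $M_\infty$, while the higher derivatives can be expressed via the Gauss--Weingarten relations as universal polynomials in $h_k$, $\nabla^j h_k$, and the Christoffel symbols of $\psi_k^* g_k$; the hypotheses therefore furnish uniform $C^m$ bounds on each $U_j$ for every $m$. An Arzel\`a--Ascoli diagonal argument over the exhaustion extracts a subsequence converging in $C^\infty_{\mathrm{loc}}$ to a smooth map $F_\infty : M_\infty \to \mathbb{R}^N$, and the identity $F_\infty^*\langle\cdot,\cdot\rangle = g_\infty$ shows that $F_\infty$ is an immersion.

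The main obstacle is the properness of $F_\infty$, and this is where the local area bound is indispensable. If $F_\infty$ failed to be proper, then $F_\infty^{-1}(\overline{B_R})$ would be non-compact for some $R$; using the uniform lower bound on the $g_\infty$-volume of unit balls coming from the curvature bound (via Bishop--Gromov or a direct harmonic-coordinate estimate), one could then find infinitely many disjoint $g_\infty$-unit balls of uniformly positive volume inside $F_\infty^{-1}(\overline{B_R})$. Transporting these back by $\psi_k$ and invoking smooth convergence, the corresponding regions in $M_k$ would be contained in $F_k^{-1}(\overline{B_{R+1}})$ and contribute arbitrarily large $g_k$-volume there, contradicting $\mu^k(B_{R+1}) \leq C_{R+1}$. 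Finally, relabelling the exhaustion so that $\psi_k(U_k) = F_k^{-1}(B_k)$ as in the statement is a bookkeeping matter, and completes the proof.
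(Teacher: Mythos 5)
The paper does not actually prove this theorem; it is quoted from Breuning's thesis, where the proof proceeds in the spirit of Langer's original argument by writing the immersed submanifolds locally as uniformly controlled Lipschitz graphs over affine planes and extracting limits of the graph functions by Arzel\`a--Ascoli. Your Cheeger--Gromov route is therefore a genuinely different proof from the cited one, but it coincides almost exactly with the proof the paper gives of its \emph{own} compactness theorem (Theorem \ref{thm: my compactness thm}) a few paragraphs later. That theorem is stated and proved precisely because the authors felt the Breuning--Langer formulation, or at least the Cheeger--Gromov method by itself, is not strong enough: after applying Cheeger--Gromov you only ever capture the connected component of the base point, whereas $F_k^{-1}(B_k)$ can contain further components that lie nearby in $\mathbb{R}^N$ yet are intrinsically far from (or even disconnected from) the base point. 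The paper handles this by an inductive argument that repeatedly applies Cheeger--Gromov to a missed base point and appends the resulting component to $M_\infty$, using the local area bound to show that in each ambient ball only finitely many components can be added. Your proposal omits this step entirely. If $\phi_k : U_k \to F_k^{-1}(B_k)$ is read as a diffeomorphism \emph{onto} $F_k^{-1}(B_k)$ (the natural reading of ``diffeomorphism''), your construction does not produce such a map, and this is a real gap, not bookkeeping.

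On the other hand, your final properness argument is a nice explicit step which the paper's own proof of Theorem \ref{thm: my compactness thm} leaves implicit, and the reduction to Cheeger--Gromov via the Gauss equation, the injectivity radius lower bound, and the pullback/Arzel\`a--Ascoli step are all in order. Two small corrections there: Bishop--Gromov gives \emph{upper} bounds on volume under a Ricci lower bound; the lower bound on the volume of $g_\infty$-unit balls that you need follows instead from the sectional curvature upper bound together with the injectivity radius lower bound, via G\"unther's (or Rauch's) comparison. And when transporting the disjoint balls back to $M_k$ you should enlarge the ambient radius by a further unit (say to $B_{R+2}$) to absorb the $C^0$ error from the convergence $\psi_k^*F_k \to F_\infty$ before invoking the area bound.
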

This is the essentially the statement of the Breuning's theorem in his thesis; we have simply changed some notation to conform with our own.  Note Breuning states the local area bound in terms of the pushforward measure.  Before we learned of Breuning's compactness theorem we did not know whether Langer's theorem did in fact hold in arbitrary codimension and we produced the following compactness theorem for immersed submanifolds in arbitrary codimension using the well-known compactness theorem of Cheeger and Gromov for abstract manifolds.  As an introduction to Cheeger-Gromov convergence and its application in Ricci flow we refer the reader to \cite{HA}.  For a complete proof of the Cheeger-Gromov compactness theorem itself, we refer the reader to \cite{Ham} as well as \cite{Pet}.  Our reference has been \cite{HA} and we use the definitions of smooth pointed Cheeger-Gromov convergence, the Arzel\`{a}-Ascoli theorem etc as they are presented in \cite{HA}.

Before we prove this theorem, we mention two issues that need to be dealt with in the proof.  First, the limit produced by applying the Cheeger-Gromov compactness theorem is an abstract limit that a priori loses all knowledge of the background space.  This creates the problem of ensuring that the limit $F_{\infty}$ is indeed an immersion, and the limit metric $g_{\infty}$ is the induced metric of the limit immersion.  A second problem that requires more work to deal with is the following:  We shall be considering pointed sequences of immersions, so the limit metric produced by the Cheeger-Gromov compactness theorem only sees the connected component of which it is part.  As an example, consider a sequence of tori where the tori simply lengthen off to infinity, and choose a base point on one side of the torus.  The Cheeger-Gromov limit of such a sequence is one cylinder, where the other side of the torus of which the base point was not part disappears in the limit.  In our extrinsic setting, we would like the limit to be two disconnected cylinders.

In order to deal with this situation, we introduce the following notion of convergence for a sequence of immersed submanifolds: For each $k \in \mathbb{N}$, let $M_k$ be a complete smooth manifold, $F_k : M_k \rightarrow \mathbb{R}^{N}$ a smooth immersion and $p_k \in M_k$ a basepoint. We say that $(M_k , F_k )$ converges to $(M_{\infty}, F_{\infty})$ on compact sets of $\mathbb{R}^N \times \mathbb{R}$ if there exists an exhaustion 
$\{U_k \}_{k \in \mathbb{N} }$ of $M_{\infty}$ and a sequence of smooth diffeomorphisms $\phi_k : U_k \rightarrow V_k \subset M_k$ satisfying: 
\begin{enumerate}
	\item For every compact $K \subset M_{\infty}$, $\phi_k^*F_k |_K$ converges in $C^{\infty}( K, \mathbb{R}^N)$ to $F_{\infty} |_K$
	\item For any compact $A \subset \mathbb{R}^N$ there is some $k_0 \in \mathbb{N}$ such that $(\phi_k^* F_k)(U_k) \cap A = F_k(M_k) \cap A$ for all $k \geq k_0$
\end{enumerate}
We remark that the Langer-Breuning compactness theorem, at least in the form stated above, is not quite satisfactory for our purposes as it does not address the second criterion of the above definition of convergence (which takes care of the second problem alluded to above).
 
\begin{thm}[Compactness theorem for immersed submanifolds]\label{thm: my compactness thm}
Suppose $(F_k , M_k )_{k \in \mathbb{N} }$ is a sequence of proper immersions $F_k : M_k \rightarrow \mathbb{R}^N$ of smooth complete $n$-dimensional manifolds $M_k$ that satisfy the following conditions:
\begin{enumerate}
	\item Uniform curvature derivative bounds: \\
	For each $k \in \mathbb{N}$,  for every $m \in \mathbb{N}$ there exists a constant $C_m(R)$ depending on $m$ and $R$ (and independent of $k$) such that $\abs{ \nabla_k^m h_k}_{ F_k } \leq C_m$
	\item The sequence $( F_k )_{k \in \mathbb{N} }$ does not disappear at infinity: \\
	There exists a radius $R > 0$ such that $B_R(0) \cap F_k(M_k) \neq \emptyset$
for all $k \in \mathbb{N}$. 
\item Local area bound: \\
	For every $R > 0$ there exists a constant $C_R$ depending on $R$ (and independent of $k$), such that
	\begin{equation*} \int_{F^{-1}_k(B_R)} d\mu_g^k \leq C_R. \end{equation*}

\end{enumerate} 
Then there exists a subsequence of $(F_k, M_k)_{ k \in \mathbb{N} }$ which converges on compact sets of $\mathbb{R}^N \times \mathbb{R}$ to a complete proper immersion $(M_{\infty}, F_{\infty})$ that satisfies the same local area bound.
\end{thm}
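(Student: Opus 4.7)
The plan is to reduce the problem to the abstract pointed Cheeger-Gromov compactness theorem applied to the induced metrics $g_k$, and then to promote the intrinsic abstract limit to an immersed limit via an Arzel\`a-Ascoli argument applied to the immersions $F_k$ themselves. A diagonalisation over basepoints chosen to detect all pieces of the image entering a given compact set of $\mathbb{R}^N$ then delivers the second criterion of the convergence definition.

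\textbf{Intrinsic convergence.} By the preceding proposition, the uniform bound on $|h_k|$ coming from hypothesis (1) gives a positive lower bound on $\inj(M_k,g_k)$ that is independent of $k$, while the Gauss equation together with the derivative bounds $|\nabla^m h_k|\le C_m$ yields uniform bounds on the intrinsic covariant derivatives $|\nabla^m R_k|$ for every $m$. For any choice of basepoints $p_k \in M_k$ these are exactly the hypotheses of pointed Cheeger-Gromov compactness, so a subsequence of $(M_k,g_k,p_k)$ converges smoothly in the pointed Cheeger-Gromov sense to a complete pointed Riemannian manifold $(M_\infty,g_\infty,p_\infty)$, with diffeomorphisms $\phi_k : U_k \to V_k \subset M_k$ satisfying $\phi_k(p_\infty)=p_k$ and $\phi_k^*g_k \to g_\infty$ in $C^\infty_{\mathrm{loc}}$, where $\{U_k\}$ exhausts $M_\infty$.

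\textbf{Extrinsic convergence.} Using hypothesis (2), fix initial basepoints $p_k^{(1)} \in F_k^{-1}(B_R(0))$ so that $|F_k(p_k^{(1)})|$ is uniformly bounded. Because each $F_k$ is isometric with respect to $g_k$ and because the covariant derivatives of $F_k$ are expressible through the second fundamental form and the ambient connection, the bounds in (1) together with the smooth convergence $\phi_k^*g_k \to g_\infty$ translate into uniform $C^\infty$ bounds on $\phi_k^*F_k$ on every compact $K \subset M_\infty$. Arzel\`a-Ascoli and a diagonal extraction then produce a smooth limit $F_\infty^{(1)} : M_\infty \to \mathbb{R}^N$ with $\phi_k^*F_k \to F_\infty^{(1)}$ in $C^\infty_{\mathrm{loc}}$, and the identity $(F_\infty^{(1)})^*\,g_{\mathrm{Eucl}} = g_\infty$ guarantees that $F_\infty^{(1)}$ is an immersion inducing the abstract limit metric. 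The local area bound (3) passes to $F_\infty^{(1)}$ by $C^0$ convergence of induced volume measures on compact sets.

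\textbf{Recovering lost components -- the main obstacle.} The construction above produces only the component of the limit containing $p_\infty$, so pieces of $F_k(M_k)$ that enter a ball $B_S(0) \subset \mathbb{R}^N$ through a component of $M_k$ disjoint from $p_k^{(1)}$ are lost, violating criterion (2) of the convergence definition. To overcome this I would show that for each fixed $S>0$ the number of connected components of $F_k^{-1}(B_S(0))$ whose image meets $B_S(0)$ is uniformly bounded in $k$: any such component contains a point $p$ with $F_k(p) \in B_S(0)$, and the curvature bound supplies a geodesic ball in $M_k$ about $p$ of radius $r_0 \sim C_0^{-1/2}$ whose image lies in a slightly larger Euclidean ball and whose induced area is bounded below by a universal positive constant, either by the monotonicity formula or by a direct graphical comparison over $T_{F_k(p)}F_k(M_k)$. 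Hypothesis (3), applied on the enlarged ball, then caps the number of such components by some $N_S < \infty$. Picking one basepoint in each, applying the previous two steps to each resulting sequence, and diagonalising over an exhaustion $B_{S_j}(0) \uparrow \mathbb{R}^N$ produces at each scale finitely many pointed Cheeger-Gromov limits; taking $M_\infty$ to be their disjoint union (identifying pieces across scales where the underlying manifolds agree) and assembling the immersions into a single $F_\infty$ delivers both criteria of the convergence definition, the completeness and properness of $F_\infty$, and the local area bound via Fatou's lemma applied to the converging pushforward measures.
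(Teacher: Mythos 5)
Your overall architecture matches the paper's: apply Cheeger--Gromov intrinsically (using the injectivity radius bound from $|h_k|\le C_0$ and intrinsic curvature bounds from Gauss), promote to extrinsic $C^\infty_{\mathrm{loc}}$ convergence of the immersions by Arzel\`a--Ascoli, and then use the local area bound to ensure no pieces of the image escape the construction. The first two steps are fine.

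The step where you recover lost components is where your argument and the paper's genuinely diverge, and where yours has a gap. You propose to \emph{pre-count} the connected components of $F_k^{-1}(B_S(0))$, claiming this number is uniformly bounded in $k$; the paper instead constructs the limit \emph{iteratively}, adding a new base point $\tilde p_k$ only when a point of $F_k(M_k)\cap\bar B_1(0)$ is detected outside the current image $V_k=\phi_k(U_k)$. The difference is not cosmetic. Your counting claim is false as stated: consider the plane curves
\[
F_k(\theta)=\bigl(S+k^{-2}\sin(k\theta)\bigr)(\cos\theta,\sin\theta),\qquad \theta\in S^1 .
\]
These are embeddings with curvature uniformly close to $1/S$, with length (hence $\mu^k(B_R)$) uniformly bounded for every $R$, yet $F_k^{-1}(B_S(0))$ has exactly $k$ connected components. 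The geodesic balls of radius $r_0$ about one representative point per component are emphatically \emph{not} pairwise disjoint here — consecutive components are at intrinsic distance of order $1/k$ — so the monotonicity/area lower bound does not accumulate, and hypothesis (3) does not cap the count. Your proposal never justifies the disjointness it implicitly relies on, and without it the argument collapses. A second, related defect: if several of your chosen base points yield identical or overlapping Cheeger--Gromov limits (as all $k$ of them do in the example above), ``taking $M_\infty$ to be their disjoint union'' produces redundant copies, so the resulting $\phi_k$ would not be a diffeomorphism onto its image; the phrase ``identifying pieces across scales where the underlying manifolds agree'' gestures at the problem but does not resolve it, and in any case the identification must also happen \emph{within} a single scale.

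The paper's iterative construction sidesteps both issues automatically. Because each new base point $\tilde p_k$ is required to lie \emph{outside} $V_k$, and the exhausting sets $U_k$ eventually contain any fixed intrinsic ball about the previous base points, the new base point is (for large $k$) intrinsically far from every previous one; hence the corresponding geodesic balls of radius $r_0$ are genuinely disjoint, each contributes a definite amount of area inside $B_2(0)$, and the local area bound forces termination after finitely many rounds. Redundancy is impossible by construction — you only add a component when the existing ones demonstrably miss a point of the image. I would recommend replacing your a priori component count with the paper's add-only-when-necessary loop, followed by the induction on nested balls $\bar B_n(0)$ and the diagonal extraction; the ingredients you already have (Cheeger--Gromov, Arzel\`a--Ascoli, area lower bound for a curvature-controlled geodesic ball) are exactly what that loop needs.
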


\begin{proof}
The first step is to obtain a suitable sequence of pointed Riemannian manifolds \\ $\{ (M_k, g_k, p_k) \}_{k \in \mathbb{N} }$ in order to invoke the Cheeger-Gromov compactness theorem.The second assumption of the theorem guarantees that there is at least one sequence of points $(p_k)_{ k \in \mathbb{N} } \in M_k$ whose image lies in some ball of finite radius.  For example, we could take the sequence of points $p_k := \min_{p\in M_k} \abs{ F_k(p) }$; this is possible because $M_k$ is compact for each $k$ and by the second assumption.  As we mentioned in Introduction, a uniform upper bound on the second fundamental form gives a uniform lower injectivity radius bound.  We may invoke the Cheeger-Gromov compactness theorem, and upon passing to a subsequence, we obtain a complete pointed limit manifold $(M_{\infty}, g_{\infty}, p_{\infty})$, an exhaustion $\{U_k \}_{k \in \mathbb{N}}$ of $M_{\infty}$, and a sequence of diffeomorphisms $(\phi_k : U_k \rightarrow V_k \subset M_k )_{k \in \mathbb{N}}$ such that $\phi_k^*g_k$ converges smoothly to $g_{\infty}$ on each compact set $K \subset M$.  By induction, it follows that all higher derivatives of $\phi_k^*F_k$ are uniformly bounded with respect to $g_{\infty}$ on compact sets of $M_{\infty}$.  Passing to a futher subsequence, we obtain smooth convergence of $\phi_k^*F_k$ to a limit immersion $F_{\infty}$ on compact sets of $M_{\infty}$.  At this stage we have shown the first condition in our definition of convergence on compact sets is satisfied.

We now need to show the second condition of our definition is also satisified.  The fact that the area bound holds on the limit is a simple consequence of the $C^1$-convergence of the metrics.  The remaining argument is accomplished by induction and a diagonal sequence argument.  We begin by looking inside a ball $\bar B_1(0)$ in the ambient space.  Suppose that there exists no $k_0 \in \mathbb{N}$ such that $\phi_k^*F_k(U_k) \cap \bar B_1(0) = F_k(M_k) \cap \bar B_1(0)$ for all $k \geq k_0$, then we can pass to a subsequence such that there exists $\tilde{p}_k \in M_k$ such that for all $k$, $\tilde{p}_k \notin V_k$, whilst $F_k(\tilde{p}_k) \in \bar B_1(0)$.  Passing to a further subsequence, we can assume the sequence of pointed manifolds $(M_k, g_k, \tilde{p}_k)$ converges to a limit $(\tilde{M}_{\infty}, \tilde{g}_{\infty}, \tilde{p}_{\infty})$, so that there is an exhaustion $\{ \tilde U_k \}_{k \in \mathbb{N} }$ and diffeomorphisms $\tilde \phi_k : \tilde U_k \rightarrow \tilde V_k \subset M_k$ with $\tilde \phi_k(\tilde p) = (\tilde p_k)$ such that $\tilde \phi_k^*g_k$ converges to $\tilde g$ smoothly on compact sets in $\tilde M$.  As before, by the Arzel\`{a}-Ascoli theorem, passing to another subsequence, we can assume that $\tilde \phi_k^*F_k$ converges smoothly on compact subsets to a limit immersion $\tilde F_{\infty}$.  Now we replace $M_{\infty}$ with $M_{\infty} \sqcup \tilde M_{\infty}$ and repeat the process again.  All of these components intersect with $\bar B_1(0)$, and have area inside $B_2(0)$ bounded below, so by the local area bound this process must stop after finitely may steps, and we have produced a manifold $M$ with finitely many connected components with both parts of the compactness theorem holding on $\bar B_1(0)$.

We complete the proof by induction on the size of the balls in the ambient space:  If we have subsequence for which both parts of the theorem hold on $\bar B_n(0)$, then we add in more components if there are points in $\bar B_{n+1}(0)$ that are in $F_k(M_k)$ but not in $\phi_k^*F_k(U_k)$.  By the same argument, after adding in finitely many components we produce a subsequence and a limit immersion satisfying both parts of the compactness theorem on $\bar B_{n+1}(0)$.
\end{proof}

We can use the above compactness theorem for immersed submanifolds to obtain a compactness theorem for mean curvature flows.  As we mentioned above, our rescaling by remaining time procedure ensures that the second fundamental from remains bounded.  Standard techniques based on the maximum principle can now be used to show that all higher derivative are also uniformly bounded independent of $k$ (see, for example, \cite{Hu2} in the the context of the original continuous rescaling, \cite{Eck} in our context of rescaled flow and also \cite{AB} for further details). The missing essential ingredient is the local area bound, which we shall address in the next section.  We closely follow the exposition in \cite{HA} of Hamilton's proof of his compactness theorem for Ricci flow (\cite{Ham}), adapting it to the mean curvature flow.

\begin{thm}[Compactness theorem for mean curvature flows]\label{thm: compactness for flows}
Suppose that $(F_k , M_k )_{k \in \mathbb{N} }$ is a sequence of proper time-dependent immersions of smooth compact $n$-dimensional manifolds $M_k$ that satisfy the mean curvature flow on the time interval $(a, b)$, where $-\infty \leq a \leq 0 \leq  b \leq \infty$.  Assume that the following conditions are satisifed:
\begin{enumerate}
	\item Uniform curvature derivative bounds: \\
	For each $k \in \mathbb{N}$, there exists a uniform constant $C_0$ such that $\abs{ h_k}_{ F_k } \leq C_0$ on $M_k \times (a,b)$
	\item The sequence doesn't (initially) disappear at infinity: \\
	There exists a time $0$ and radius $R > 0$ such that $B_R(0) \cap F_k(M_k, 0) \neq \emptyset$
for all $k \in \mathbb{N}$. 
\item Initial local area bound: \\
	For every $R > 0$ there exists a constant $C_R$ depending on $R$ (and independent of $k$), such that
	\begin{equation*} \int_{F_k^{-1}(\cdot, \, 0)(B_R(0))} d\mu_{g_{0}}^k \leq C_R. \end{equation*}
\end{enumerate} 
Then there exists a subsequence $(F_k, M_k)_{ k \in \mathbb{N} }$ which converges on compact sets of $\mathbb{R}^N \times \mathbb{R}$ to a complete proper time-dependent immersion $(M_{\infty}, F_{\infty})$ that is also a solution to the mean curvature flow on the time interval $(a,b)$.
\end{thm}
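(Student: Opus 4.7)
The strategy is to reduce to the static compactness theorem (Theorem \ref{thm: my compactness thm}) applied at the initial time $t=0$, extract diffeomorphisms $\phi_k$ that identify a limit manifold $M_{\infty}$ with larger and larger pieces of the $M_k$, and then push the flows $F_k(\cdot, t)$ back to $M_{\infty}$ via these diffeomorphisms so that a diagonal Arzel\`{a}--Ascoli extraction yields a smooth space-time limit. The first step is to upgrade the hypothesis $\abs{h_k} \leq C_0$ to uniform bounds on $\abs{\nabla^m h_k}$ for every $m$ on any compact subinterval of $(a,b)$: this is a standard parabolic interior estimate for the mean curvature flow obtained by Shi-type arguments, as used in \cite{Eck} and \cite{AB}. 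I would then verify that the initial local area bound propagates in time: since $\p_t F_k = H_k$ with $\abs{H_k} \leq \sqrt{n}\, C_0$, one has $F_k(\cdot,t)^{-1}(B_R(0)) \subset F_k(\cdot,0)^{-1}(B_{R+\sqrt{n}C_0\abs{t}}(0))$, and the evolution $\p_t g_{ij} = -2 \langle H, h_{ij}\rangle$ gives $\sqrt{\det g_t} \leq e^{nC_0^2\abs{t}}\sqrt{\det g_0}$, so the initial local area bound yields a local area bound at every $t$ in any compact subinterval of $(a,b)$.

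With these bounds in hand I would apply Theorem \ref{thm: my compactness thm} to the sequence $F_k(\cdot, 0)$, obtaining a limit immersion $F_{\infty}(\cdot,0) : M_{\infty} \to \mathbb{R}^N$, an exhaustion $\{U_k\}$ of $M_{\infty}$, and diffeomorphisms $\phi_k : U_k \to V_k \subset M_k$ with $\phi_k^*F_k(\cdot,0) \to F_{\infty}(\cdot,0)$ smoothly on compact sets and with the second convergence criterion holding at $t=0$. I would next define $\tilde F_k(p,t) := F_k(\phi_k(p), t)$ on $U_k \times (a,b)$. Because $\phi_k^*g_k(\cdot,0) \to g_{\infty}$ in $C^{\infty}_{\mathrm{loc}}$ and the spatial derivatives of $F_k$ together with their time derivatives (the latter being $H_k$ and its covariant derivatives, controlled by the parabolic estimates above) are uniformly bounded with respect to $g_{\infty}$ on compact subsets of $M_{\infty} \times (a,b)$, Arzel\`{a}--Ascoli together with a diagonal subsequence argument produces a smooth limit $F_{\infty} : M_{\infty} \times (a,b) \to \mathbb{R}^N$. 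The equation $\p_t F = H$ is preserved under $C^{\infty}$-convergence, so the limit solves the mean curvature flow, and the local area bound passes to $F_{\infty}$ by $C^1$-convergence of the induced metrics.

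The main obstacle I expect is the second condition in the definition of convergence on compact sets of $\mathbb{R}^N \times \mathbb{R}$---that no piece of $F_k(M_k, t)$ escapes the image $\phi_k^*F_k(U_k)$ in the limit. At $t=0$ this is built into Theorem \ref{thm: my compactness thm}. For $t \neq 0$ the containment $F_k(\cdot,t)^{-1}(B_R(0)) \subset F_k(\cdot,0)^{-1}(B_{R+\sqrt{n}C_0\abs{t}}(0))$ shows that any escaping piece of $F_k(M_k, t)$ inside $B_R(0)$ must, at time $0$, already lie inside the enlarged ball $B_{R+\sqrt{n}C_0\abs{t}}(0)$; since the time-$0$ compactness captures all such pieces, for $k$ large they are contained in $V_k = \phi_k(U_k)$, so the second convergence criterion propagates automatically to every time in $(a,b)$.
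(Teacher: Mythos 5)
Your proposal is correct and follows essentially the same route as the paper: apply the static compactness theorem at $t=0$ using the initial area bound and interior-in-time higher-derivative estimates, pull the flows back to $M_\infty$ via the time-$0$ diffeomorphisms, extract a space-time limit by Arzel\`{a}--Ascoli and diagonalization, and settle the second convergence criterion by the finite-speed bound $\lvert F_k(p,t_2) - F_k(p,t_1)\rvert \le C\lvert t_2 - t_1\rvert$. The only divergence is that your propagation-in-time of the area bound is not actually needed, since (as in the paper and in your own argument) the static compactness theorem is invoked only at $t=0$.
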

\begin{proof}
The first step in the proof is to produce a sequence of pointed immersions in order to apply the compactness theorem for immersed submanifolds.  We may assume that $0$ in the second condition above is $0$ as $0 \in (a,b)$, then as before we choose a sequence of points $p_k$ such that $\abs{ F_k(p_k, 0) }$ in minimised.  The uniform upper bound on the second fundamental form ensures that $inj_{g_k(0)} \geq \delta > 0$ independent of $k$.  Furthermore, assuming only the bound $\abs{ h }_k \leq C_0$, the interior-in-time higher derivatives estimates (see, for example, \cite{AB}) give bounds on all higher derivatives:
\[ \abs{ \nabla^m h(p,t) }_k \leq C(m,\epsilon, C_0) \]
for all $p \in M$ and $t \in [a + \epsilon, b)$, for each small $\epsilon >0$.  The sequence $(F_k, M_k, g_k(0), p_k)$ satisfies the conditions of the compactness theorem for immersed submanifolds, thus there exists an exhaustion 
$\{U_k \}_{k \in \mathbb{N} }$ of $M_{\infty}$ and a sequence of smooth diffeomorphisms $\phi_k : U_k \rightarrow V_k \subset M_k$ satisfying:
\begin{enumerate}
	\item For every compact $K \subset M_{\infty}$, $\phi_k^*F_k(\cdot,0) |_K$ converges in $C^{\infty}( K, \mathbb{R}^N)$ to $F_{\infty} |_K$; and
	\item For any compact $A \subset \mathbb{R}^N$ there is some $k_0 \in \mathbb{N}$ such that $(\phi_k^* F_k(\cdot,0)(U_k) \cap A = F_k(M_k,0) \cap A$ for all $k \geq k_0$.
\end{enumerate}
Define the diffeomorphisms
\begin{align*}
	\psi_k : U_k \times (a,b) &\rightarrow V_k \times (a,b) \\
	(p,t) &\mapsto (\phi(p), t).
\end{align*}
The idea now is to obtain uniform $C_{\infty}$ control on $\tilde F_k = \psi_k^*F_k$ on compact subsets of $M_{\infty} \times (a,b)$.  Note that the $\tilde F_k$ are defined on $M_k \times (a,b)$, but at the moment $F_{\infty}$ is only defined at $t=0$.  To do this, fix a compact set $Z \subset M_{\infty}$ and $k_0$ sufficiently large so that for all $k \geq k_0$, $Z \subset U_k$.  The first derivative of $F_k$ is the induced metric $g_k$.  The induced metrics $g_k(t)$ are uniformly comparable to $g_{\infty}(0)$ since by the convergence statement they are comparable at $t=0$, and by a standard result (see, for example, \cite{AB}, or \cites{Ham, HA} for the equivalent Ricci flow statement) they remain comparable for $t \in (a,b)$.  To uniformly bound the higher derivatives of $F$, we argue by induction.  In \cite{HA} we derived the following equation for the higher derivatives:
\[ \left| \frac{ \p }{ \p t } \nabla_{\infty}^k \tilde F_k \right|_{g_{\infty}} \leq C(C_0, \epsilon) \left( 1 + \abs{ \nabla_{\infty}^k \tilde F_k}_{g_{\infty}} \right). \]  Arguing by induction (see, for example, \cites{Ham, HA} for the Ricci flow details), we obtain bounds on all higher derivatives independent of $k$ on $Z \times [a+\epsilon, b)$.  Higher derivatives in time are also uniformly bounded, as each derivative in time can be expressed in terms of spatial derivatives using the mean curvature flow equation.  It follows by the Arzel\`{a}-Ascoli theorem (see, for example, \cite{HA}) that there exists a subsequence that converges smoothly on $Z \times [a+\epsilon, b-\epsilon]$.  A diagonal sequence argument then produces a subsequence that converges smoothly on compact sets of $M_{\infty} \times (a,b)$.

The convergence just obtained satisfies the first condition of our definition of convergence on compact sets of $\mathbb{R} \times \mathbb{R}$. We now show that the second condition is also satisfied.  The curvature bounds imply that a point can only move a finite extrinsic distance in finite time: For every $p \in M_k$ such that $F(p, t_1)$ is the ball of radius $R$, then for $a < t_1 \leq t_2 < b$ by the curvature bounds we have
\[ \abs{ F_k(p,t_2) - F_k(p,t_1) } \leq \left| \int_{t_1}^{t_2} \frac{ \p }{ \p t } F_k(p, t) \, dt \right| \leq C\abs{ t_2 - t_1}, \]
so $F_k(p,t_2)$ is in the ball of radius $R + C\abs{ t_2 - t_1 }$ and $p$ is in the image of $\phi_k$ for $k$ sufficiently large.  
\end{proof}
The proof of the theorem adapts in an obvious manner to other geometric flows that satisfy the necessary conditions.  For a similar `compactness theorem for Willmore flows' based on the Breuning-Langer compactness theorem, we direct the reader to \cite{KS}.

\section{Huisken's monotonicity formula and its consequences}
Let $(\bar x, \bar t)$ be a fixed point of $\mathbb{R}^{n+k} \times \mathbb{R}$.  The backwards heat kernel centred at the fixed point $(\bar x, \bar t)$ is defined by
\[ \rho_{\bar x, \bar t} := \frac{ 1 }{ (4\pi(\bar t - t))^{n/2} } e^{ \frac{ \abs{\bar x - x} }{ 4(\bar t - t) } }. \]
When the heat kernel is centred at $(0,0)$ we shall drop the subscripts and simply denote it by $\rho(x,t)$, as opposed to $\rho_{0, 0}(x,t)$.

\begin{thm}[Huisken's monotonicity formula]
Let $F : M^n \times [0, T) \rightarrow \mathbb{R}^{n+k}$ be a solution the mean curvature flow.  Then the monotonicity formula
\[ \frac{d}{dt} \int_M \rho_{\bar x, T}(x,t) \, d\mu_{g(t)} = - \int_M \left| H(x,t) + \frac{1}{2(T-t)} F^{\perp}(x,t) \right|^2 \, d\mu_{g(t)} \]
holds for all $t \in [0, T)$.
\end{thm}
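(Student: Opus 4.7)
The plan is to compute the pointwise time derivative of the integrand $\rho_{\bar x, T}\, d\mu_{g(t)}$, identify the sought-after squared term together with a divergence on $M$, and then integrate using the closedness of $M$ to eliminate the divergence.

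First I would reduce to the case $\bar x = 0$ by exploiting translation invariance of the flow and write $\rho := \rho_{0, T}$ for brevity. A direct ambient computation gives
\[ \frac{\p \rho}{\p t}\bigg|_x = \rho\left[\frac{n}{2(T-t)} - \frac{\abs{x}^2}{4(T-t)^2}\right], \qquad \bar\nabla \rho = -\frac{\rho\, x}{2(T-t)}. \]
Combining the chain rule with the evolution of the volume form $\frac{d}{dt} d\mu_g = -\abs{H}^2\, d\mu_g$ along the flow, and noting that $\bar\nabla\rho \cdot H = -\rho\, F^\perp \cdot H /(2(T-t))$ since $H$ is normal, I obtain
\[ \frac{d}{dt}(\rho\, d\mu_g) = \rho\left[\frac{n}{2(T-t)} - \frac{\abs{F}^2}{4(T-t)^2} - \frac{F^\perp \cdot H}{2(T-t)} - \abs{H}^2\right] d\mu_g. \]

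The heart of the argument is to recognise the bracketed quantity as $-\bigl|H + F^\perp/(2(T-t))\bigr|^2$ plus the intrinsic Laplacian of $\rho$. For a function restricted from the ambient space, the Gauss formula for the induced Laplacian gives, in any orthonormal tangent frame $\{e_i\}_{i=1}^n$,
\[ \Delta_M \rho = \sum_{i=1}^n \bar\nabla^2 \rho(e_i, e_i) + H \cdot \bar\nabla \rho. \]
Inserting the ambient Hessian $\bar\nabla^2 \rho = \rho\bigl[ x\otimes x/(4(T-t)^2) - \delta/(2(T-t))\bigr]$ and using the decomposition $\abs{F}^2 = \abs{F^\top}^2 + \abs{F^\perp}^2$, a short calculation yields
\[ \frac{d}{dt}(\rho\, d\mu_g) = -\rho\left|H + \frac{F^\perp}{2(T-t)}\right|^2 d\mu_g - (\Delta_M \rho)\, d\mu_g. \]
Integrating over the closed manifold $M$ then annihilates $\int_M \Delta_M \rho\, d\mu_g$ by the divergence theorem, producing the monotonicity formula.

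The main obstacle is not conceptual but organisational. One must keep careful track of the $(T-t)$ powers and consistently split the position vector $F$ into its tangential and normal components. The delicate cancellation --- in which the $\abs{F}^2/(4(T-t)^2)$ term arising from $\p_t \rho$ combines with the $\abs{F^\top}^2/(4(T-t)^2)$ term from $\Delta_M \rho$ to leave exactly the $\abs{F^\perp}^2/(4(T-t)^2)$ demanded by the completed square --- is what reveals the divergence structure and makes the monotonicity work.
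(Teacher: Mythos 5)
The paper does not prove this theorem: it only remarks that Huisken's original proof in \cite{Hu2} is valid in arbitrary codimension and cites that reference. Your argument reproduces exactly that standard computation --- differentiate $\rho\, d\mu_{g(t)}$ pointwise along the flow, complete the square, and absorb the remainder into $\Delta_M \rho$ via the tangential trace of the ambient Hessian plus the $H\cdot\bar\nabla\rho$ correction, which integrates to zero on closed $M$. I checked the algebra: the ambient derivatives of $\rho$, the identity $\bar\nabla\rho\cdot H = -\rho\, F^\perp\cdot H/(2(T-t))$, and the cancellation
\[
-\rho\left|H+\tfrac{F^\perp}{2(T-t)}\right|^2 - \Delta_M\rho
= \rho\left[\tfrac{n}{2(T-t)} - \tfrac{|F|^2}{4(T-t)^2} - \tfrac{F^\perp\cdot H}{2(T-t)} - |H|^2\right]
\]
all check out, with the $|F^\top|^2/(4(T-t)^2)$ pieces from the two sources recombining with $|F^\perp|^2/(4(T-t)^2)$ to give $|F|^2/(4(T-t)^2)$ as you describe. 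One small slip in the prose: you say the bracketed quantity is the completed square ``plus the intrinsic Laplacian of $\rho$,'' but your displayed formula correctly has it \emph{minus} $\Delta_M\rho$; the displayed formula is the right one. Otherwise the proof is correct and is the proof the paper points to.
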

We highlight that Huisken's original proof of the monotonicity formula in \cite{Hu2} is valid in arbitrary codimension.  Note the backwards heat kernel is defined on the ambient space and so we are adhering to the abuse of notation mentioned at the beginning of this chapter.  The centre of our backward heat kernel will most often be $(\hat{p}, T) \in \mathbb{R}^{n+k} \times \mathbb{R}$.  For each pair of times $0 < t_1 < t_2 < T$, the monotonicity formula implies that 
\begin{equation*}
	\int_{ M } \rho_{ \hat{p}, T } \, d\mu_{g_{ t_2 }} \leq \int_{ M } \rho_{ \hat{p}, T } \, d\mu_{g_{ t_1 }}
\end{equation*}
and being the limit of a monotone sequence of decreasing functions, the limit
\begin{equation*}
	\lim_{ t \rightarrow T } \int_{M} \rho_{ \hat{p}, T } \, d\mu_{g_t}
\end{equation*}
certainly exists and is finite.  We shall also use the notation
\begin{equation*}
	\theta(p,t) := \int_{M} \rho_{ \hat{p}, T } \, d\mu_{g_t}
\end{equation*}
and
\begin{equation*}
	\Theta(p) := \lim_{ t \rightarrow T } \theta(p,t).
\end{equation*}
Since $\Theta$ is the limit of a monotone sequence of continuous functions, it follows that $\Theta$ is upper-semicontinuous.  We refer to $\theta$ as the heat density and $\Theta$ as the limit heat density.  An important property of the monotonicity formula is that it is invariant under parabolic rescalings.  By the definition of our parabolic rescaling, for each $k$ we have
\begin{align*}
	\int_{M} \rho_{ \hat{p}, T } \, d\mu_{g_t} &=  \frac{ 1 }{ ( 4\pi(T - t) )^{n/2} } \int_{M} e^{ - \frac{ \abs{x-\hat{p}}^2 }{ 4(T-t) } } \, d\mu_{g_t} \\
	&= \frac{ 1 }{ (-4\pi s)^{n/2} } \int_{ M } e^{ -\frac{ \abs{y}^2}{-4s} } \, d\mu_{g_s}^{(\hat{p}, T), \lambda_k} \\
	&= \int_{ M } \rho \, d\mu_{g_s}^{(\hat{p}, T), \lambda_k}.
\end{align*}
Recalling that $t = T + s/\lambda_k^2$, for each fixed $s \in [-\lambda_k^2 T, 0)$ and all $k$ we have
\[ \int_{M} \rho_{\hat{p},T} \, d\mu_{g_t} =  \int_{M} \rho \, d\mu_{g_{s}}^{(\hat{p}, T), \lambda_{k}}, \]
and consequently
\begin{equation}\label{eqn: rescaled mono 1}
	\lim_{ t \rightarrow T } \int_{M} \rho_{ \hat{p}, T } \, d\mu_{g_t} = \lim_{ k \rightarrow \infty } \int_{M } \rho \, d\mu_{g_s}^k.
\end{equation}
When it is (reasonably) clear which point we are rescaling around, we will often omit the notation $(\hat{p}, T)$ above the measure as we have just done to reduce clutter.  An important application of the monotonicity formula is that it provides the local area bound (independent of k) necessary to apply the compactness theorem for mean curvature flows.  It suffices to obtain the area bound on bounded open subintervals $I_{k_0}:= (-\lambda_{k_0}^2T, 1/k_0)$, as the final argument will be completed by a diagonal sequence argument sending $k_0$ to infinity.  Let us fix a point $p \in M$ and some $k_0$ sufficiently large.  With these choices of $p$ and $k_0$, for all $s \in I_{k_0}$ and every $k \geq k_0$ monotonicity formula gives the estimate
\begin{equation*}
	\int_{M } \rho \, d\mu_ {g_{ s }}^k \leq \int_{ M } \rho_{ \hat{p}, T } d\mu_{ g_{0} } \leq \frac{ \mu_{ g_{0} }(M) }{ (4\pi T)^ \frac{ n }{ 2 } }.
\end{equation*}
We then compute
\begin{align*}
	\int_{F_k^{-1}(B_R)} \, d\mu_{g_s}^k &= \int_{M} \chi_{B_R} \, d\mu_{g_s}^k \\
	&\leq \int_{M} \chi_{B_R} e^{ \frac{ R^2 - \abs{y}^2 }{ -4s } }  \, d\mu_{g_s}^k \\
	&\leq e^{ \frac{ R^2}{ -4s } } \int_{M} e^{ \frac{- \abs{y}^2 }{ -4s } }  \, d\mu_{g_s}^k \\
	&\leq e^{ \frac{ R^2 }{ -4s } } (-4\pi s )^{n/2} \int_{ M } \frac{ 1 }{ (-4\pi s)^{n/2} } e^{ \frac{- \abs{y}^2 }{ -4s } }  \, d\mu_{g_s}^k \\
	&\leq e^{ \frac{ k_0R^2 }{ 4 } } \lambda_{k_0}^n \mu_{g_{0} }(M),
\end{align*}
and thus
\[ \int_{F_k^{-1}(B_R)} \, d\mu_{g_s}^k \leq C_R(M_0, T, k_0). \]

The area bound depends on $k_0$, but not on $k$.  We can now apply Theorem \ref{thm: compactness for flows} to our sequence of rescaled flows $F_k : M \times (-\lambda_{k_0}^2T, 1/{k_0}) \rightarrow \mathbb{R}^{n+k}$ defined by
\[ F_k(p, s) =  \lambda_k \big( F(p, T + s/\lambda_k^2)  - \hat q \big). \] The existence of the limit flow $(F_{\infty}, M_{\infty})$ on the time interval $(-\infty, 0)$ follows by a diagonal sequence argument letting $k_0 \rightarrow \infty$.  We highlight that here $M$ is fixed, and by assumption closed, however $M_{\infty}$ is complete and not necessarily compact.

The area bound can also be obtained without Huisken's monotonicity formula, provided we assume a suitable area growth condition on the initial submanifold.  For a solution of the mean curvature flow (in any codimension) defined on the time interval $t \in (\tau - R^2/(8n), \tau)$, Ecker proves in \cite{Eck}*{Prop. 4.9} the local area decay estimate
\begin{equation}\label{eqn: Ecker}
	\int_{ F^{-1}(B_{R/2}) } d\mu_{g_{\tau}} \leq 8 \int_{ F^{-1}(B_{R}) } d\mu_{g_{\tau - R^2/8n}}.
\end{equation}
Assume now that the initial submanifold satisfies the area growth bound 
\begin{equation}\label{eqn: Ecker1}
	\int_{F^{-1}_0(B_R)} \, d\mu_{g_{0}} \leq AR^n
\end{equation}
for all $R \geq R_0$, where $A$ is a uniform constant and $R_0$ is some fixed large radius.  Combining equations \eqref{eqn: Ecker} and \eqref{eqn: Ecker1} we get
\begin{align}
\int_{ F^{-1}(B_{R}) } d\mu_{g_t} &\leq C(n,T,R_0) \int_{ F^{-1}(B_{2R}) } d\mu_{g_{0}} \\
&\leq C(p,n,T,R_0,A)R^n
\end{align}
which holds for all $t \in [0, T)$ and each $R \geq R_1 > R_0$, where $R_1$ is some new fixed large radius.  Upon rescaling this last estimate we obtain
\begin{equation}\label{eqn: Ecker3}
	\int_{ F_k^{-1}(B_{R}) } d\mu_{g_s}^k \leq \int_{ F_k^{-1}(B_{\lambda_kR}) } d\mu_{g_s}^k \leq C(p,n,T,R_0,A)R^n,
\end{equation}
which is the desired local area bound independent of $k$.

Another consequence of the monotonicty formula is the following important result, which enables us to pass the limit through the integral in the rescaled heat densities.  The result is due independently to Ilmanen \cite{Il} and Stone \cite{St}, who proved it slightly different contexts. Ilmanen proved it in the setting of Brakke flows, while Stone proved it in the context of Huisken's original continuous rescaling argument.  We recast their proof in our setting.

\begin{prop}\label{p: tightness of measures}
Let $F_k : M \times [-\lambda_k^2T, 0) \rightarrow \mathbb{R}^{N}$ be a sequence of proper mean curvature flows of a closed manifold $M$ that subconverges on compact sets of $\mathbb{R}^N \times \mathbb{R}$ to a proper mean curvature flow $F_{\infty} : M_{\infty} \times (\infty, 0) \rightarrow \mathbb{R}^{N}$, where $M_{\infty}$ is a complete manifold.  Assume that for all $R > 0$ the initial submanifold satisfies the area bound
\[ \int_{F^{-1}_0(B_R)} \, d\mu_{g_{0}} \leq AR^n. \]
Then for any given $\epsilon > 0$ and any fixed point $p \in M$, there exists a sufficiently large radius radius $R_2$ such that for each fixed $s \in (-\lambda_{k_0}^2 T, 1/k_0)$ and all $k > k_0$ we have
\begin{equation*}
	\int_{ M \setminus F^{-1}_k(B_{R}) } \rho \, d\mu_{g_s}^{(\hat{p}, T), \lambda_k} \leq \epsilon.
\end{equation*}
\end{prop}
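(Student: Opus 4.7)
The plan is to exploit the Gaussian decay of $\rho$ combined with the polynomial local area bound \eqref{eqn: Ecker3}, which is $k$-independent. First I would slice the exterior into dyadic annular preimages:
\[ M \setminus F_k^{-1}(B_R) = \bigsqcup_{j=0}^{\infty} A_j^k, \qquad A_j^k := F_k^{-1}\bigl(B_{2^{j+1}R} \setminus B_{2^j R}\bigr). \]
On each $A_j^k$ the rescaled position satisfies $\abs{y} \geq 2^j R$, so pointwise
\[ \rho(y,s) \leq \frac{1}{(-4\pi s)^{n/2}} \exp\!\left(-\frac{(2^j R)^2}{-4s}\right). \]

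Next I would apply the local area bound \eqref{eqn: Ecker3} at every dyadic radius (which holds uniformly in $k$ once $2^{j+1}R \geq R_1$) to get $\mu_{g_s}^k(A_j^k) \leq C (2^{j+1}R)^n$, where $C = C(p, n, T, R_0, A)$. Combining these two estimates and summing yields
\[ \int_{M \setminus F_k^{-1}(B_R)} \rho \, d\mu_{g_s}^k \leq \frac{C\cdot 2^n}{(-4\pi s)^{n/2}} \sum_{j=0}^{\infty} (2^j R)^n \exp\!\left(-\frac{(2^j R)^2}{-4s}\right). \]

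After substituting $a_j := 2^j R / (2\sqrt{-s})$, each summand becomes a universal constant (depending only on $n$) times $a_j^n e^{-a_j^2}$, where $a_{j+1} = 2 a_j$. Once $a_0 = R/(2\sqrt{-s})$ exceeds a threshold depending only on $n$, the Gaussian factor dominates the polynomial factor and the series is controlled by its leading term, which itself tends to zero as $a_0 \to \infty$. Since $-s \leq \lambda_{k_0}^2 T$ on the interval in question, choosing
\[ R_2 \geq \max\bigl\{R_1,\; c_{n,\epsilon}\, \lambda_{k_0}\sqrt{T}\bigr\} \]
with $c_{n,\epsilon}$ depending only on $n$, $\epsilon$ and the constant $C$ above makes the sum at most $\epsilon$ simultaneously for all $k > k_0$ and all $s \in (-\lambda_{k_0}^2 T, 0)$.

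The only real obstacle is bookkeeping: ensuring the area bound \eqref{eqn: Ecker3} is available at every dyadic radius with a single $k$-independent constant, and verifying that the supremum of $a_j^n e^{-a_j^2}$-type tails can indeed be made uniformly small in $s$ on the whole backward interval. Both of these follow from the explicit form of \eqref{eqn: Ecker3} and the elementary observation that $\sup_{s \in (-\lambda_{k_0}^2 T, 0)} R/\sqrt{-s} \to \infty$ as $R \to \infty$, so no deeper input is required beyond the rescaling invariance of the area estimate already established.
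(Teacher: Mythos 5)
Your proposal is correct and follows essentially the same strategy as the paper: decompose the exterior of $B_R$ into nested annuli, apply the $k$-independent rescaled area bound \eqref{eqn: Ecker3} on each annulus, and let the Gaussian decay of $\rho$ dominate the polynomial area growth. The only cosmetic difference is that the paper uses annuli $B_{R^{j+1}} \setminus B_{R^j}$ (geometric in the exponent of $R$) while you use dyadic annuli $B_{2^{j+1}R} \setminus B_{2^j R}$; your substitution $a_j = 2^j R/(2\sqrt{-s})$ also makes the cancellation of the $(-s)^{n/2}$ factors, and hence the uniformity of the choice of $R_2$ over $s \in (-\lambda_{k_0}^2 T, 0)$, slightly more transparent than in the paper's phrasing, but the underlying estimate is the same.
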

\begin{proof}
By equation \eqref{eqn: Ecker3} above, for all $k\geq k_0$ sufficiently large and each fixed $s \in (-\lambda_{k_0}^2T, 1/k_0)$, we have
\[ \int_{F^{-1}_k(B_R)} d\mu_{g_s}^k \leq CR^n \]
which holds for every $R > R_1$.  For each fixed $s \in ( -\lambda_{k_0}^2 T, 1/k_0)$ we estimate
\begin{align*}
	\int_{ M  \setminus F_k^{-1}(B_{R}) } \rho \, d\mu_{g_s}^{ (\hat{p}, T), \lambda_k} &\leq \frac{ C }{ (-s)^{ \frac{ n }{ 2 } } } \sum_{j=1}^{\infty} \int_{ F_k^{-1} (B_{ R^{j+1} } \setminus B_{R^j} ) } e^{ -R^{2j} / (-4s) } \, d\mu_{g_s}^{ (\hat{p}, T), \lambda_k } \\
	&\leq \frac{ C }{ (-s)^{ \frac{ n }{ 2 } } } \sum_{j=1}^{\infty} R^{ n(j+1) } e^{ -R^{2j}/(-4s) }.
\end{align*}
For each fixed $s \in (-\lambda_{k_0}^2 T, 1/k_0)$, the term on the right can be made as small as we like by choosing $R$ sufficiently large, so for any given $\epsilon$ we can fix $R$ sufficiently large such that the desired estimate holds for all $R \geq R_2$.
\end{proof}

The proposition is, by definition, the statement that the family of weighted measures $\rho \, d\mu_{g_s}^k$ is tight for each fixed $s$.  By Prohorov's Theorem we immediately obtain the following important corollary:
\begin{equation*}
	\lim_{k \rightarrow \infty} \int_{M } \rho \, d\mu_{g_s}^k = \int_{ M_{\infty} } \rho \, d\mu_{g_s}^{\lambda_{\infty} } < \infty.
\end{equation*}
Let us dwell for a second on why this result is important:  The limit manifold $M_{\infty}$ we obtain from the compactness theorem is complete, and not necessarily compact.  Certainly if $M_{\infty}$ contains a compact component, then this component is diffeomorphic to $M$ by definition of the convergence.  However, if $M_{\infty}$ is only complete, as it often will be, then the integral
\[ \int_{M_{\infty}} \rho \, d\mu_{g_s}^{\lambda_{\infty}} \]
could possibly be infinite.  The fact that the weighted family of measure is tight ensures that the measure `does not escape to infinity' in the limit.  We remark that the $C^1$-convergence of $F_k$ and $g_k$ obtained in the compactness theorem can be used to show that $\mu_k \rightarrow \mu$, that is the pushforward measures converge weak-${*}$ in $\mathbb{R}^{n+k}$.

\section{A partial classification of special type I singularities}
In order to probe the shape of the evolving submanifold as the first  singular time is approached, we want to rescale the monotonicity formula around the singular point $\hat{p}$.  A point $p \in M$ is called a general singular point if there exists a sequence of points $p_k \rightarrow p$ and times $t_k \rightarrow T$ such that for some constant $\delta > 0$,
\begin{equation*}
	\abs{ h }^2(p_k, t_k) \geq \frac{ \delta }{ T - t_k }.
\end{equation*}
A point $p \in M$ is called a special singular point if there exists a sequence times $t_k \rightarrow T$ such that for some constant $\delta > 0$,
\begin{equation*}
	\abs{ h }^2(p, t_k) \geq \frac{ \delta }{ T - t_k }.
\end{equation*}
This distinction between singular points is not made in \cite{Hu1}, and the points studied in \cite{Hu1} are actually special singular points (see Definition 2.1 of \cite{Hu1}).  The analysis to cope with moving points was subsequently contributed by Stone in \cite{St}.  We now give a partial classification of special type I singularities in high codimension.

\begin{thm}
Let $M : \times [0, T) \rightarrow \mathbb{R}^{n+k}$ be a solution of the mean curvature flow.  If the evolving submanifold develops a special type I singularity as $t \rightarrow T$, then there exists a sequence of rescaled flows $F_k : M \times [\lambda_k^2 T, 0) \rightarrow \mathbb{R}^{n+k}$ that subconverges to a limit flow $F_{\infty} : M_{\infty} \times (-\infty, 0) \rightarrow \mathbb{R}^{n+k}$ on compact set of $\mathbb{R}^{n+k} \times \mathbb{R}$ as $k \rightarrow \infty$.  Moreover, $F_{\infty} : M_{\infty} \times (-\infty, 0) \rightarrow \mathbb{R}^{n+k}$ satisfies $ H_{\infty} = - 1/(2s)F_{\infty}^{\bot}$ and is not a plane.
\end{thm}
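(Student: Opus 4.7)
The plan is to build the blow-up via the parabolic rescaling already set up in Section 3, apply the compactness theorem for mean curvature flows, and then extract self-similarity from Huisken's monotonicity formula. Choosing the sequence $t_k \to T$ from the definition of a special singular point, I set $\lambda_k := 1/\sqrt{2(T-t_k)}$ and form the rescaled flows $F_k$ as in \eqref{eqn: rescaled flows}. The type I hypothesis gives $|h_k|^2 \leq -C_0/(2s)$ on $(-\lambda_k^2 T, 1/k)$, which is uniformly bounded on every compact subinterval of $(-\infty, 0)$, and the standard interior estimates then supply uniform bounds on all higher derivatives independent of $k$. The local area bound required for Theorem \ref{thm: compactness for flows} has already been obtained from the monotonicity formula in the previous section. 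For the basepoint I take $p$ itself: the type I bound $|H| \leq C/\sqrt{T-t}$ forces $t \mapsto F(p,t)$ to have finite arc length and makes $\lambda_k|F(p,t)-\hat{p}|$ uniformly bounded for every fixed $s<0$, so the ``does not disappear at infinity'' condition is verified. A diagonal sequence argument sending $k_0 \to \infty$ then produces a subsequential limit flow $F_\infty : M_\infty \times (-\infty, 0) \to \mathbb{R}^{n+k}$.

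For the self-similar equation I would use scale-invariance of the monotonicity formula. By \eqref{eqn: rescaled mono 1} and the existence of the finite limit $\Theta(p) = \lim_{t\to T}\theta(p,t)$, for every fixed $s<0$ we have
\[
\int_M \rho\, d\mu_{g_s}^k = \theta(p, T + s/\lambda_k^2) \longrightarrow \Theta(p) \qquad \text{as } k \to \infty.
\]
Integrating the rescaled monotonicity identity over any fixed $[a,b] \subset (-\infty, 0)$ then gives
\[
\int_a^b \int_M \rho\, \Big| H_k - \frac{F_k^\perp}{2s} \Big|^2 d\mu_{g_s}^k\, ds \;=\; \int_M \rho\, d\mu_{g_a}^k - \int_M \rho\, d\mu_{g_b}^k \;\longrightarrow\; 0.
\]
The tightness of the weighted measures $\rho\, d\mu_{g_s}^k$ supplied by Proposition \ref{p: tightness of measures} is precisely what allows the spatial integral to pass to the limit in spite of the non-compactness of $M_\infty$; on compact pieces of $M_\infty$, smooth convergence of $\phi_k^* F_k$ gives pointwise convergence of the integrand. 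Nonnegativity of the limiting integrand together with $\rho > 0$ then forces $H_\infty = F_\infty^\perp/(2s)$ on $M_\infty \times [a,b]$, which is the self-similar shrinker equation \eqref{eqn: MCF soliton}; since $[a,b]$ was arbitrary, it holds on all of $M_\infty \times (-\infty, 0)$.

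For the non-triviality, at rescaled time $s = -1/2$ the corresponding original time is exactly $t_k$, so the rescaling of the second fundamental form gives
\[
|h_k|^2\bigl(p, -\tfrac{1}{2}\bigr) \;=\; \frac{|h|^2(p, t_k)}{\lambda_k^2} \;=\; 2(T - t_k)\, |h|^2(p, t_k) \;\geq\; 2\delta
\]
directly from the defining property of a special singular point. Smooth convergence of $\phi_k^* F_k$ near the basepoint transfers this lower bound to the limit, giving $|h_\infty|^2(p_\infty, -1/2) \geq 2\delta > 0$, so $F_\infty$ is not a plane.

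The main obstacle I expect is the passage to the limit in the spacetime integral: $M_\infty$ is only complete and not compact, so a priori the weighted mass of $\rho\, d\mu_{g_s}^k$ could escape to infinity and the vanishing of the integral on all of $M$ need not force pointwise vanishing in the limit. This is precisely the difficulty Proposition \ref{p: tightness of measures} is engineered to overcome, using the uniform local area estimates derived from \eqref{eqn: Ecker}. Everything else is careful bookkeeping of the parabolic rescaling together with direct use of the definitions of type I and special singular point.
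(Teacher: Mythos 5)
Your proposal takes essentially the same route as the paper: rescale with $\lambda_k = 1/\sqrt{2(T-t_k)}$, invoke the compactness theorem for mean curvature flows (with the area bound from the monotonicity formula and the basepoint kept near the origin by the type~I bound on $|H|$), integrate the rescaled monotonicity identity over a fixed time interval, use Proposition~\ref{p: tightness of measures} to pass the vanishing limit through the spacetime integral and conclude the shrinker equation, and finally transfer the pointwise lower bound on $|h|^2$ at the base point to the limit to rule out a plane. Two small points where you are actually more careful than the paper's text: the original time $t_k$ corresponds to rescaled time $s=-1/2$, as you write, not $s=-1$ as in the paper (which nevertheless substitutes $|h|^2(p,t_k)$ and so has a harmless bookkeeping slip); and the correct rescaled shrinker equation is $H_\infty = \tfrac{1}{2s}F_\infty^{\bot}$ with $s<0$, consistent with \eqref{eqn: MCF soliton}, rather than the $-\tfrac{1}{2s}$ appearing in the paper's theorem statement.
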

\begin{proof}
The existence of the limit flow is guaranteed, subject to the necessary assumptions, by the compactness theorem for mean curvature flows.  It remains to show the last two assertions of the proposition.  Suppose that the special type I singulariy forms at some point $(\hat{p}, T) \in \mathbb{R}^{n+k} \times \mathbb{R}$, so by definition there exists a sequence times $t_k \rightarrow T$ such that for some constant $\delta > 0$, we have $\abs{ h }^2(p, t_k) \geq \frac{ \delta }{ T - t_k }$.
Rescaling Huisken's monontonicity formula at each scale $\lambda_k = 1/\sqrt{2(T-t_k)}$ about the single fixed point $\hat{p}$ gives
\begin{equation*}
	\frac{ d }{ ds } \int_{ M } \rho \, d\mu_{g_s}^k = - \int_{ M } \rho \Big| H_k + \frac{ 1 }{ 2s } F_k^{ \bot } \Big|^2 \, d\mu_{g_s}^k,
\end{equation*}
which holds for all $k$ and $s \in [-\lambda_k^2T, 0)$.  For any fixed $s_0 \in [-\lambda_k^2T, 0)$ and $\sigma > 0$ we integrate this from $s_0 -  \sigma$ to $s_0$ and rearrange a little to get
\begin{equation*}
			 \int_{s_0 - \sigma }^{ s_0 } \int_{ M } \rho \Big| H_k + \frac{ F_k^{ \bot } }{ 2s } \Big|^2 \, d\mu_{g_s}^k = \int_{ M } \rho \, d\mu_{g_{ s_0 - \sigma } }^k - \int_{ M } \rho \, d\mu_{g_{ s_0 }}^k.
\end{equation*}
We take the limit as $k \rightarrow \infty$, and by equation \eqref{eqn: rescaled mono 1} and Proposition \ref{p: tightness of measures} we have
\begin{equation*}
	\int_{ M_{\infty} } \rho \, d\mu_{g_{ s_0 - \sigma } }^{ \infty } = \lim_{ t \rightarrow T } \int_{M} \rho_{ ( \hat{p}, T ) } \, d\mu_{g_t} = \int_{ M_{\infty} } \rho \, d\mu_{g_{s_0} }^{ \infty }  < \infty.
\end{equation*}
We then conclude, using Proposition \ref{p: tightness of measures} again, that
\begin{align*}
	\lim_{ k \rightarrow \infty } \int_{s_0 - \sigma }^{ s_0 } \int_{ M } \rho \Big| H_k + \frac{ F_k^{ \bot } }{ 2s } \Big|^2 \, d\mu_{g_s}^k &= \int_{ s_0 - \sigma }^{ s_0 } \int_{ M_{\infty} } \rho \Big| H_{ \infty } +\frac{ 1 }{ 2s } F_{ \infty }^{ \bot } \Big|^2 \, d\mu_{g_s}^{ \infty } \\
	&= 0,
\end{align*}
and therefore $ H_{ \infty } = - 1/(2s) F_{ \infty }^{\bot}$ on $s \in [s_0 - \sigma, s_0]$.  Finally, for every scale $\lambda_k$, at the fixed point $p$ at time $s_k = -1$ the rescaled second fundamental form satifies the lower bound
\begin{align*}
	\abs{ h }_k^2(p,s_k) &= \frac{ \abs{ h }^2(p, t_k) }{ \lambda_k^2 } \\
	&\geq 2(T - t_k) \cdot \frac{ \delta }{ -s_k } \cdot \frac1{ 2(T - t_k) } \\
	&= \delta.
\end{align*}
Thus the the limit flow also satisifies $\abs{ h }_{ \infty }^2(p, -1 ) \geq \delta$ and consequently it is not flat.
\end{proof}

We have just shown that the blow-up limit of a type I singularity is self-similar. In order to give a partial classification of these solutions, in addition to assuming that $M_0$ satisifes $\abs{ H }_{ \text{min} } > 0$, we also assume it satisfies the pinching condition $\abs{h}^2 \leq 4/(3n) \abs{ H }^2$.  The pinching condition allows us to eventually reduce the problem to that of classifying hypersurfaces of a $\mathbb{R}^{n+1}$.  The classification result we shall need is the following:

\begin{prop}\label{prop: covariant const class}
Let $F : M^n \rightarrow \mathbb{R}^{n+1}$ be an immersion of a closed manifold.  If $F(M)$ satisfies $\nabla h = 0$, then $F(M)$ is of the form $\mathbb{S}^{p} \times \mathbb{R}^{n-p}$, where $0 \leq p \leq n$.
\end{prop}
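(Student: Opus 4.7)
The plan is to use the parallel second fundamental form to produce both an intrinsic and extrinsic product decomposition, and then identify each factor from the umbilic classification of Euclidean submanifolds.

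First, I would translate $\nabla h = 0$ into the statement that the shape operator $A$, defined by $h(X,Y) = \langle AX, Y\rangle$, is a parallel field of endomorphisms of $TM$. This forces the coefficients of its characteristic polynomial to be parallel scalars and hence constant on the (assumed connected) $M$; in particular the distinct principal curvatures $\mu_1 < \cdots < \mu_r$ and their multiplicities are globally constant, and each eigendistribution $E_i \subset TM$ is smooth of constant rank and invariant under parallel transport. Because the Levi-Civita connection is torsion free, each parallel distribution $E_i$ is then involutive, and its leaves are totally geodesic in $M$.

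Next, I would pass to the universal cover $\widetilde M$ and invoke the de Rham decomposition theorem on the reducible holonomy representation $TM = E_1 \oplus \cdots \oplus E_r$ to obtain an isometric splitting $\widetilde M = \widetilde M_1 \times \cdots \times \widetilde M_r$ with $T \widetilde M_i$ corresponding to $E_i$. Observing that $h(X,Y) = \langle AX, Y\rangle = \mu_i \langle X, Y\rangle = 0$ whenever $X \in E_i$ and $Y \in E_j$ with $i \neq j$, I would then apply J.\,D.\ Moore's splitting lemma for isometric immersions: the lifted immersion $\widetilde F : \widetilde M \to \mathbb{R}^{n+1}$ factors through a decomposition of $\mathbb{R}^{n+1}$ into mutually orthogonal affine subspaces $V_1, \ldots, V_r$ via isometric immersions $\widetilde F_i : \widetilde M_i \to V_i$ whose product recovers $\widetilde F$.

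Each factor $\widetilde F_i$ has second fundamental form equal to $\mu_i$ times the metric, so it is totally umbilic in $V_i$ with constant mean curvature; the standard classification of totally umbilic submanifolds of Euclidean space then gives a round sphere $\mathbb{S}^{n_i}(1/|\mu_i|) \subset V_i$ when $\mu_i \neq 0$ and an affine subspace $\mathbb{R}^{n_i} = V_i$ when $\mu_i = 0$, where $n_i = \dim \widetilde M_i$. A dimension count yields $n + \#\{i : \mu_i \neq 0\} = \sum_i \dim V_i \leq n + 1$, forcing at most one spherical factor, and we conclude $\widetilde F(\widetilde M) = \mathbb{S}^p(r) \times \mathbb{R}^{n-p}$ for some $p \in \{0, 1, \ldots, n\}$. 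Descending to $F(M)$ is automatic whenever the spherical factor is simply connected, and in the remaining low-dimensional cases a short check on the deck group, which must respect the extrinsic orthogonal decomposition, shows no nontrivial identifications occur.

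The main obstacle is invoking Moore's splitting lemma, which is the step that promotes the purely intrinsic de Rham splitting to an extrinsic one compatible with the immersion into $\mathbb{R}^{n+1}$. If one wished to avoid a direct appeal, it could be replaced by a hands-on argument showing that the subbundle $dF(E_i) \oplus \mathbb{R}\nu \subset \mathbb{R}^{n+1}$ is parallel in the Euclidean sense along leaves of $E_i$ and constant in directions orthogonal to $E_i$, using $\nabla h = 0$ together with the Gauss and Weingarten relations; this is routine but technically the most delicate part of the argument.
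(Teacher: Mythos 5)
The paper does not actually give a proof of this proposition: it states it and then refers to \cite{CdCK} and \cite{Law}, noting that the former is stated for hypersurfaces of a sphere but adapts to a flat background. So there is no proof in the paper for your argument to coincide with or diverge from. What you have done is supply a self-contained proof, and it takes the standard modern route: parallel shape operator $\Rightarrow$ constant principal curvatures and parallel eigendistributions $\Rightarrow$ de Rham splitting of the universal cover $\Rightarrow$ Moore's lemma to make the splitting extrinsic $\Rightarrow$ classification of the umbilic factors and a codimension count. The classical references proceed by more hands-on local computations with the Codazzi equations and curvature distributions, so your route is genuinely different in flavour, though it establishes the same result; what it buys you is a clean conceptual structure and fewer index computations, at the cost of importing two nontrivial theorems (de Rham and Moore).

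The argument itself is sound. Two small remarks. First, the closing discussion about descending from $\widetilde M$ to $M$ and checking the deck group is unnecessary for the statement as written: the conclusion concerns the image $F(M)$, and since $\widetilde F = F \circ \pi$ with $\pi$ the covering projection (which is surjective), one has $F(M) = \widetilde F(\widetilde M)$ automatically, regardless of the topology of $M$ or the deck group. Second, you implicitly use that one can iterate Moore's two-factor splitting to handle $r$ factors, and that a complete local isometry into $\mathbb{R}^{n_i}$ is a global isometry, both of which are correct but worth saying. One last caution unrelated to your proof: the proposition as stated in the paper says $M$ is \emph{closed}, yet the conclusion permits non-compact cylinders, and the proposition is applied to the complete (not necessarily compact) blow-up limit $M_\infty$; ``closed'' should be read as ``complete,'' which is exactly the setting your proof handles.
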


A proof of this result can be found in \cite{CdCK} and \cite{Law}.  The result stated in \cite{CdCK} is for hypersurfaces of the sphere, but the adaption to a flat background is straightforward.
	
Let us now commence with classification in the compact case.  Up to now is has been convenient to analyse the limit flow at the time $-1$.  In following it becomes slightly less cumbersome to analyse the limit flow at time $-1/2$, as was done in \cite{Hu2} and \cite{Hu3}.

\begin{thm}
Suppose that $F_{\infty} : M_{\infty}^n \times (-\infty, 0) \rightarrow \mathbb{R}^{n+k}$ arises as a blow-up limit of the mean curvature flow $F: M^n \times [0, T) \rightarrow \mathbb{R}^{n+k}$ about a special singular point.  Assume that $M_0$ satisfies $\abs{ H }_{ \text{min} } > 0$ and $\abs{ h }^2 \leq 4/(3n) \abs{ H }^2$.  If $F_{\infty}(M_{\infty}, -1/2)$ is compact, then it must be a sphere $\mathbb{S}^n(\sqrt{n})$ or one of the cylinders $\mathbb{ S }^{ m }( \sqrt{m} ) \times \mathbb{R}^{n-m}$, where $1 \leq m \leq n-1$.
\end{thm}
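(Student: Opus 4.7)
The plan is to deduce from the previous theorem that the shrinker identity $H_\infty = -F_\infty^{\perp}$ holds at $s=-1/2$, and to combine this with the pinching $|h|_\infty^2 \leq (4/(3n))|H|_\infty^2$ inherited from Lemma \ref{l: preservation of pinching} to reduce the classification to Proposition \ref{prop: covariant const class}. First I would check that the hypotheses descend to the limit: the expression $|h|^2 + a \leq c|H|^2$ scales under $F_k = \lambda_k(F - \hat q)$ with $|h|^2,|H|^2 \mapsto |h|^2/\lambda_k^2,\,|H|^2/\lambda_k^2$ while the constant $a$ is fixed, so after sending $k\to\infty$ one retains $|h|_\infty^2 \leq (4/(3n))|H|_\infty^2$ on $M_\infty$. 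The lower bound $|h|_\infty^2(p,-1) \geq \delta$ from the preceding theorem, combined with pinching, forces $|H|_\infty(p,-1) > 0$, which via the shrinker equation spreads to $|H|_\infty > 0$ throughout the relevant component.

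Second, at $s = -1/2$ the shrinker equation reads $H = -F^\perp$, and I would compute the drift Laplacian
\[ \mathcal{L} := \Delta - \tfrac{1}{2}\langle F, \nabla(\cdot)\rangle \]
applied to $|h|^2 - (1/n)|H|^2$. Using the Simons-type identity in arbitrary codimension developed in \cite{AB}, together with the shrinker equation to convert ambient radial terms into $H$-terms, this expands schematically as
\[ \mathcal{L}\bigl(|h|^2 - \tfrac{1}{n}|H|^2\bigr) = 2|\nabla h|^2 - \tfrac{2}{n}|\nabla H|^2 - \bigl(|h|^2 - \tfrac{1}{n}|H|^2\bigr) + Q(h), \]
where $Q(h)$ collects the quadratic Simons and normal-curvature $R^\perp$ reaction terms. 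The numerical threshold $4/(3n) = 1/n + 1/(3n)$ is exactly what is needed to make $Q(h) \leq 0$ under the pinching, and a Kato-type inequality gives $|\nabla h|^2 \geq (1/n)|\nabla H|^2$ in the pinched regime.

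Third, assuming $M_\infty$ is compact, I would integrate the above against the Gaussian weight $e^{-|F|^2/4}$, under which $\mathcal{L}$ is formally self-adjoint and the absence of boundary contributions gives
\[ \int_{M_\infty} \bigl(|h|^2 - \tfrac{1}{n}|H|^2\bigr) e^{-|F|^2/4} \, d\mu \leq 0. \]
The reverse pointwise inequality is Cauchy-Schwarz (since $H$ is the trace of $h$), so $|h|^2 \equiv (1/n)|H|^2$ identically on $M_\infty$. Tracking equality cases yields $h_{ij} = (1/n) g_{ij} H$ as a normal-bundle-valued $2$-tensor, so the first normal space is one-dimensional and spanned by $H$; the vanishing gradient terms in the integral identity give $\nabla h = 0$ and $\nabla^\perp H = 0$. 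An Erbacher-type codimension reduction then places $F_\infty(M_\infty,-1/2)$ inside an affine $(n+1)$-plane of $\mathbb{R}^{n+k}$.

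Finally, Proposition \ref{prop: covariant const class} applied to the resulting compact hypersurface with $\nabla h = 0$ gives $F_\infty(M_\infty,-1/2) = \mathbb{S}^p(r) \times \mathbb{R}^{n-p}$; compactness forces $p = n$, and matching the shrinker identity $H = -F^\perp$ on $\mathbb{S}^n(r)$ (where $F = r\nu$ and $H = -(n/r)\nu$) fixes $r = \sqrt{n}$. The main obstacle in this program is controlling the sign of the reaction term $Q(h)$ in arbitrary codimension: the normal-curvature $R^\perp$ contributions appearing in Simons' identity have no hypersurface analogue, and the threshold $4/(3n)$ was chosen in \cite{AB} precisely so that the scalar Simons terms dominate the $R^\perp$ terms. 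This algebraic input, together with the equality analysis of Cauchy-Schwarz for vector-valued second fundamental forms needed for the codimension reduction, is the delicate part of the argument.
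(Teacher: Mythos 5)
Your overall strategy --- use the shrinker equation, a Simons-type elliptic identity, and the pinching to reduce to Proposition~\ref{prop: covariant const class} --- is the right framework, and the observations about the pinching descending to the limit (the additive constant scaling away), the positivity of $|H|$, and the codimension reduction at the end are all sound. However, the central analytic step does not close as stated, because of the choice of quantity $|h|^2 - (1/n)|H|^2$ rather than the ratio $|h|^2/|H|^2$.

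The problem is the zeroth-order term. From the shrinker identity $H=-F^\perp$ at $s=-1/2$, differentiating twice gives (as in the paper's equations \eqref{e: compact H2}--\eqref{e: compact h2})
\[
\Delta |H|^2 = 2|H|^2 - 2\textstyle\sum(H\cdot h_{ij})^2 + \langle F,F_*\p_p\rangle\nabla_p|H|^2 + 2|\nabla H|^2,
\]
and similarly $\Delta|h|^2$ has the zeroth-order coefficient $+2|h|^2$. So the drift Laplacian applied to your difference produces
\[
\mathcal{L}\bigl(|h|^2-\tfrac1n|H|^2\bigr) = \;+\,2\bigl(|h|^2-\tfrac1n|H|^2\bigr) \;+\; 2\bigl(|\nabla h|^2-\tfrac1n|\nabla H|^2\bigr) \;+\; Q(h),
\]
with a \emph{positive} coefficient $+2$, not the $-1$ you wrote. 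Integrating against the Gaussian weight then gives
\[
0 = 2\!\int\!\bigl(|h|^2-\tfrac1n|H|^2\bigr)e^{-|F|^2/4} + 2\!\int\!\bigl(|\nabla h|^2-\tfrac1n|\nabla H|^2\bigr)e^{-|F|^2/4} + \int\! Q\, e^{-|F|^2/4}.
\]
Now the first integral is $\geq 0$ always (Cauchy--Schwarz on the trace), the second is $\geq 0$ (Kato/Codazzi), and $Q\leq 0$ under pinching; the three signs are $\geq 0$, $\geq 0$, $\leq 0$, and nothing forces any individual one to vanish. In fact a cylinder $\mathbb{S}^m\times\mathbb{R}^{n-m}$ has $|h|^2-\tfrac1n|H|^2>0$, $\nabla h=0$, $Q<0$ --- all nonzero --- and its shrinker equation is consistent with this balance; compactness alone does not rule it out of your integral identity. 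Thus the claimed inequality $\int(|h|^2-\tfrac1n|H|^2)e^{-|F|^2/4}\leq 0$ is unsupported, and the conclusion $|h|^2\equiv(1/n)|H|^2$ (total umbilicity) does not follow.

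The fix is exactly the paper's device: work with the scale-invariant \emph{ratio} $|h|^2/|H|^2$. Under the quotient rule, the $+2|h|^2$ and $+2|H|^2$ zeroth-order terms cancel identically, so $\Delta(|h|^2/|H|^2)$ has no zeroth-order part, only gradient and reaction terms. In the compact case one does not even need the Gaussian integral: at an interior maximum of the ratio, $\Delta(|h|^2/|H|^2)\leq 0$ and $\nabla(|h|^2/|H|^2)=0$, and the paper's estimate \eqref{e: compact h2 / H2} plus the gradient estimate $|\nabla h|^2\geq\tfrac{3}{n+2}|\nabla H|^2$ and the pinching force $\nabla h=0$ and $|\ho_-|^2=0$. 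This last condition (not the stronger $\ho=0$ you claim) is what performs the codimension reduction to a hypersurface with parallel $h$, after which Proposition~\ref{prop: covariant const class} and compactness give the sphere. Your Gaussian-weighted integration-by-parts belongs to the subsequent noncompact case, and there too it must be applied to the ratio (with a suitable weight such as $|h|^2\rho$), not to the difference.
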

\begin{proof}
Take the inner product of $ H = - F^{\bot}$ with $H$ and differentiate to get
\[ 2 \langle \nabla_j  H, H \rangle = -\langle \nabla_j F, H \rangle - \langle F, \nabla_j H \rangle. \]
We use the Gauss relation to compute
\[ \langle \nablap_j H, H \rangle = \big\langle H, \langle F, F_*\p_p \rangle h_{ip} \big\rangle, \]
so $\nablap_j H = \langle F, F_*\p_p \rangle h_{jp}$.  A further differentiation gives
\begin{align}
	\notag \nabla_i \nabla_j H &= \langle F_*\p_i, F_*\p_p \rangle h_{jp} + \langle F, h_{ip} \rangle h_{jp} + \langle F, F_*\p_p \rangle \nabla_p h_{ij} \\
	\notag &=	\langle  F_*\p_i, F_*\p_p \rangle h_{jp} - \langle H, h_{ip} \rangle h_{jp} + \langle F, F_*\p_p \rangle \nabla_p h_{ij} \\
	&= h_{ij} - H \cdot h_{ip} h_{jp} + \langle F, F_*\p_p \rangle \nabla_p h_{ij}. \label{e: compact 2}
\end{align}
Contracting \eqref{e: compact 2} with $g_{ij}$ gives
\begin{equation*}
	\Delta H = H - H \cdot h_{ip} h_{ip} + \langle F, F_*\p_p \rangle \nabla_p H,
\end{equation*}
and after taking the inner product with $H$ we obtain
\begin{equation}\label{e: compact H2}
	\Delta \abs{H}^2 = 2 \abs{H}^2 - 2 \sum_{ i,j } ( H \cdot h_{ij} )^2 + \langle F, F_*\p_p \rangle \nabla_p H \cdot H + 2 \abs{ \nabla H }^2.
\end{equation}
On the other hand, contracting \eqref{e: compact 2} with $g_{ij}$ we get
\begin{equation}\label{e: compact 3}
	h_{ij} \cdot \nabla_i \nabla_j H = \abs{h}^2 - H \cdot h_{ip} h_{ij} \cdot h_{jp} + \langle F, F_*\p_p \rangle \nabla_p h_{ij} \cdot h_{ij}.
\end{equation}
Now recall Simons' indentity: $\Delta \abs{h}^2 = 2 h_{ij} \cdot \nabla_i \nabla_j H + 2 \abs{ \nabla h }^2 + 2Z$.  Combining Simons' indentity and \eqref{e: compact 3} gives
\begin{equation}\label{e: compact h2}
	\Delta \abs{ h }^2 = 2 \abs{ h }^2 + 2\langle F, F_*\p_p \rangle \nabla_p h_{ij} \cdot h_{ij} + 2 \abs{ \nabla h }^2 - 2 \sum_{ \alpha, \beta } \Big( \sum_{ i, j } h_{ij\alpha} h_{ij\beta} \Big) - \abs{ \Rp }^2.
\end{equation}
Note that the term $H \cdot h_{ip} h_{ij} \cdot h_{jp}$ cancels.  The idea now is to examine the scaling-invariant quantitiy $\abs{ h }^2 / \abs{ H }^2$, and to do so, we first establish $\abs{ H } \neq 0$ in order to perform the division.  The strong elliptic minimum principle applied to equation \eqref{e: compact H2} shows that either $\abs{ H } \equiv 0$ or $\abs{ H } > 0$ everywhere.  Since $F_{\infty}(M_{\infty})$ is assumed to be compact, it must be that $\abs{ H } > 0$ everywhere.  Using equations \eqref{e: compact H2} and \eqref{e: compact h2} we compute $\Delta( \abs{ h }^2 / \abs{ H }^2 )$ and obtain
\begin{equation}\label{e: compact h2 / H2}
	\begin{split}
		0 &= \Delta \left( \frac{ \abs{ h }^2 }{ \abs{ H }^2 } \right) - \frac{ 2 }{ \abs{ H }^2 } \big( \abs{ \nabla h }^2 - \frac{ \abs{ h }^2 }{ \abs{ H }^2 } \abs{ \nabla H }^2 \big) + \frac{ 2 }{ \abs{ H }^2 } \big( R_1 - \frac{ \abs{ h }^2 }{ \abs{ H }^2 } R_2 \big) \\		&\qquad + \frac{ 2 }{ \abs{ H }^2 } \nabla_i \abs{ H }^2 \nabla_i \left( \frac{ \abs{ h }^2 }{ \abs{ H }^2 } \right) - \langle F, F_*\p_i \rangle \nabla_i \left( \frac{ \abs{ h }^2 }{ \abs{ H }^2 } \right).
	 \end{split}
\end{equation}
Since $F_{\infty}(M_{\infty}, -1/2)$ is assumed to be compact, the function $\abs{ h }^2 / \abs{ H }^2 $ attains a maximum somewhere in $M$.  At a maximum $\nabla_i ( \abs{ h }^2 / \abs{ H }^2 ) = 0$ and $\Delta ( \abs{ h }^2 / \abs{ H }^2 ) \leq 0$, so at a maximum we have
\begin{equation*}
	0 = \Delta \left( \frac{ \abs{ h }^2 }{ \abs{ H }^2 } \right) - \frac{ 2 }{ \abs{ H }^2 } \big( \abs{ \nabla h }^2 - \frac{ \abs{ h }^2 }{ \abs{ H }^2 } \abs{ \nabla H }^2 \big) + \frac{ 2 }{ \abs{ H }^2 } \big( R_1 - \frac{ \abs{ h }^2 }{ \abs{ H }^2 } R_2 \big).
\end{equation*}
Moreover, from the basic gradient estimate
\[ \abs{\nabla h}^2 \geq \frac{3}{n+2} \abs{\nabla H}^2 \]
and the Pinching Lemma we can estimate
\begin{equation}
	0 \leq \Delta \left( \frac{ \abs{ h }^2 }{ \abs{ H }^2 } \right) - c_1(n) \abs{ \nabla h }^2 - c_2(n) \abs { \ho_1 }^2\abs{ \ho_- }^2 - c_3(n) \abs{ \ho_- }^4,
\end{equation}
where $c_1$, $c_2$ and $c_3$ are positive constants that depend only on $n$. We conclude from the strong elliptic maximum principle that $\abs{ h }^2 / \abs{ H }^2$ must be equal to a constant and $\abs{ \nabla h }^2 = \abs{ \ho_- }^2 = 0$.  This implies that $F_{\infty}(M_{\infty})$ is a hypersurface of some $(n+1)$-subspace of $\mathbb{R}^{n+k}$ with covariant constant second fundamental form, and since was assumed to be compact, from Proposition \ref{prop: covariant const class} it must be a $n$-sphere.
\end{proof}
If $F_{\infty}(M_{\infty}, -1/2)$ is no longer compact then we cannot apply the maximum principle as we have just done.  In this more general case, following \cite{Hu3}, we multiply equation \eqref{e: compact h2 / H2} by $\abs{ h }^2 \rho $ and integrate by parts.  The following theorem includes the previous one as a special case.
\begin{thm}\label{thm: general class}
Suppose that $F_{\infty} : M_{\infty}^n \times (-\infty, 0) \rightarrow \mathbb{R}^{n+k}$ arises as a blow-up limit of the mean curvature flow $F: M^n \times [0, T) \rightarrow \mathbb{R}^{n+k}$ about a special singular point.  If $M_0$ satisfies $\abs{ H }_{ \text{min} } > 0$ and $\abs{ h }^2 \leq 4/(3n) \abs{ H }^2$, then $F_{\infty}(M_{\infty},-1/2)$ must be a sphere $\mathbb{S}^n(\sqrt{n})$ or one of the cylinders $\mathbb{S}^{m}(\sqrt{m}) \times \mathbb{R}^{n-m}$, where $1 \leq m \leq n-1$.
\end{thm}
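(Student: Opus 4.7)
The plan is to upgrade the elliptic maximum-principle argument of the preceding compact-case theorem to a Gaussian-weighted integral identity on the complete, possibly non-compact, self-similar limit $F_\infty(\cdot,-1/2)$, following the strategy Huisken used for hypersurfaces in \cite{Hu3}. The derivation of \eqref{e: compact h2 / H2} for the pinching ratio $f := \abs{h}^2/\abs{H}^2$ is purely local and uses only the self-similar identity, so it transfers verbatim to $M_\infty$; what needs to be replaced is the appeal to a point where $f$ attains its maximum, which need not exist.

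The first step is to verify $\abs{H}_\infty > 0$ everywhere, so that $f$ is a well-defined smooth function. Equation \eqref{e: compact H2} is a Laplace-drift equation for $\abs{H}^2$ on $M_\infty$, and the strong minimum principle gives the dichotomy $\abs{H}\equiv 0$ or $\abs{H} > 0$ everywhere. The first alternative would force $F_\infty^\perp \equiv 0$, hence $F_\infty$ to be an affine subspace through the origin, contradicting the preceding theorem's conclusion that the blow-up limit is not a plane.

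The second step is to multiply \eqref{e: compact h2 / H2} by $\abs{h}^2 \rho$ and integrate over $M_\infty$ at time $s = -1/2$. The key algebraic feature is that the tangential gradient of the Gaussian weight on $M_\infty$ is proportional to $\rho \langle F, F_*\p_i\rangle$, so the drift term $-\langle F, F_*\p_p\rangle \nabla_p f$ in \eqref{e: compact h2 / H2} is exactly what is required for the Laplacian of $f$, once integrated by parts against $\abs{h}^2\rho$, to produce no residual first-order contribution. Combining the resulting identity with the pinching condition $\abs{h}^2 \leq 4/(3n)\abs{H}^2$ and the basic gradient estimate $\abs{\nabla h}^2 \geq 3/(n+2)\abs{\nabla H}^2$ leads, after a further integration by parts on the mixed gradient term, to an identity of the schematic form
\[ 0 = \int_{M_\infty} \rho\left(c_1(n) \abs{\nabla h}^2 + c_2(n) \abs{\ho_1}^2 \abs{\ho_-}^2 + c_3(n) \abs{\ho_-}^4\right) d\mu_{g_{-1/2}}^\infty, \]
with positive constants $c_i(n)$, forcing $\nabla h \equiv 0$ and $\ho_- \equiv 0$ on $M_\infty$. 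Consequently $F_\infty(M_\infty,-1/2)$ lies in an affine $(n+1)$-subspace of $\mathbb{R}^{n+k}$ with parallel second fundamental form, and by Lawson's classification of complete such hypersurfaces (cf.\ \cite{Law} and Proposition \ref{prop: covariant const class}), it must be $\mathbb{S}^m(r) \times \mathbb{R}^{n-m}$ for some $0 \leq m \leq n$; the self-similar equation $H = -F^\perp$ then pins the radius to $r = \sqrt{m}$, giving the stated list.

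The main obstacle I expect is justifying the two integrations by parts and the convergence of each weighted integral on the complete non-compact $M_\infty$. For this I appeal to the rescaled local area bound \eqref{eqn: Ecker3} together with the tightness proved in Proposition \ref{p: tightness of measures}, which together guarantee that $\rho\abs{h}^2$, $\rho \abs{\nabla h}^2$ and the other integrands are in $L^1(d\mu_{g_{-1/2}}^\infty)$ and that the boundary flux from exhausting $M_\infty$ by $F_\infty^{-1}(\bar B_R)$ vanishes as $R \to \infty$; the uniform curvature bound $\abs{h}_\infty^2 \leq -C_0/(2s)$ inherited from the type I hypothesis furnishes the pointwise control needed to make the Gaussian weight dominate. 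It is precisely this interplay between the monotonicity-derived tightness and the self-similar cancellation of the drift that makes the weighted argument close in high codimension despite the absence of a global maximum of $f$.
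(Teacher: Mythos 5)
Your proposal follows essentially the same route as the paper: multiply \eqref{e: compact h2 / H2} by $\abs{h}^2\rho$, integrate by parts against the Gaussian weight (whose tangential gradient absorbs the self-similar drift), invoke the pinching and gradient inequalities to force $\nabla h\equiv 0$ and $\ho_-\equiv 0$, and then apply Proposition~\ref{prop: covariant const class} together with the self-shrinker equation to pin down the radius. The only difference is that you make explicit two points the paper leaves implicit --- the strong-minimum-principle argument that $\abs{H}_\infty>0$ on the non-compact limit, and the use of the tightness from Proposition~\ref{p: tightness of measures} together with the local area bound to justify the integrability and vanishing boundary flux --- which is a welcome strengthening rather than a deviation.
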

\begin{proof}
We multiply equation \eqref{e: compact h2 / H2} by $\abs{ h }^2\rho $ and integrate the term involving the Laplacian by parts to achieve
\begin{equation*}
 	\begin{split}
		0 &= -\int_{ M_{\infty} } \Big| \nabla \left( \frac{ \abs{ h }^2 }{ \abs{ H }^2 } \right) \Big|^2 e^{ \frac{ - \abs{ x }^2 }{ 2 } } \, d\mu_{g} - 2 \int_{ M_{\infty} } \frac{ \abs{ h }^2 }{ \abs{ H }^2 } \big( \abs{ \nabla h }^2 - \frac{ \abs{ h }^2 }{ \abs{ H }^2 } \abs{ \nabla H }^2 ) e^{ \frac{ - \abs{ x }^2 }{ 2 } } \, d\mu_{g} \\
		&\qquad + 2 \int_{ M_{\infty} } \frac{ \abs{ h }^2 }{ \abs{ H }^2 } ( R_1 - \frac{ \abs{ h }^2 }{ \abs{ H }^2 } R_2 ) e^{ \frac{ - \abs{ x }^2 }{ 2 } } \, d\mu_{g}. \end{split}
\end{equation*}
The above equation again implies that $\abs{ h }^2 / \abs{ H }^2$ must be equal to a constant and $\abs{ \nabla h }^2 = \abs{ \ho_- }^2 = 0$ and the theorem follows.

\section{General type I singularities}

As we mentioned in the Introduction, because the mean curvature flow in high codimension does not preserve embeddedness we are not able to extend Stone's hypersurface argument to high codimension.  Let us explore a little why this is the case.  Stone's result for hypersurfaces is the following:

\begin{prop}\label{prop: Stone}
Let $F : M^n \times [0, T) \rightarrow \mathbb{R}^{n+1}$ be a solution of the mean curvature flow.  Suppose that $M_0$ is embedded and satisfies $\abs{ H }_{\min} \geq 0$.  If the evolving submanifold develops a type I singularity at some point $p \in M$ as $t \rightarrow T$, then $p$ is a special singular point.
\end{prop}
Stone's analysis shows that it is in fact enough to understand special singular points.  We follow closely \cite{St}, adapting his proof from the continuous rescaling setting to that of rescaled flows.  Stone's argument uses the classification of special type I singularities for hypersurfaces obtained by Huisken in \cite{Hu2} and \cite{Hu3}:

\begin{thm}\label{thm: Huisken classification}
Let $F_{ \infty} (M_{\infty}^n, -1/2) \subset \mathbb{R}^{n+1}$ be a hypersurface that arises as a blow-up limit of the mean curvature flow.  If $M_0$ is embedded and satisfies $H \geq 0$, then $F_{ \infty} (M_{\infty}^n, -1/2)$ must be a hyperplane, the sphere $\mathbb{S}^n(\sqrt{n})$ or one of the cylinders $\mathbb{S}^{m}(\sqrt{m}) \times \mathbb{R}^{n-m}$, where $1 \leq m \leq n-1$.
\end{thm}
Our equivalent theorem for submanifolds is Theorem \ref{thm: general class}.  In Stone's argument, it is essential to conclude that the limit flow is embedded.  We know from Proposition \ref{prop: preservation of embeddedness} that embeddedness of hypersurfaces is preserved.  It is also true that the limit of type I rescalings of embedded hypersurfaces is also embedded; for a proof of this we refer the reader to \cite{Man}.

\begin{proof}[Proof of Proposition \ref{prop: Stone}]
Suppose that $M_t$ is developing a general type I singularity at some point $(p, T)$.  By definition, there exists a sequence of points $p_k \rightarrow p$ and times $t_k \rightarrow T$ such that for some constant $\delta > 0$,
\begin{equation*}
	\abs{ h }^2(p_k, t_k) \geq \frac{ \delta }{ T - t_k }.
\end{equation*}
As before, we want rescale the monotonicity formula, but now we need to rescale about the moving point $\hat{p}_k$.  Rescaling the monotonicity formula about the moving points $\hat{p}_k$ gives
\begin{equation*}
	\frac{ d }{ ds } \int_{ M } \rho \, d\mu_{g_s}^{ (\hat{p}_k, T), \lambda_k} = - \int_{ M } \rho \Big| H_k + \frac{ 1 }{ 2s } F_k^{ \bot } \Big|^2 \, d\mu_{g_s}^{ (\hat{p}_k, T), \lambda_k },
\end{equation*}
which holds for each $k$ and $s \in [-\lambda_k^2T, 0)$.  For any fixed $s_0 \in [-\lambda_k^2T, 0)$ and $\sigma > 0$ we integrate this from $s_0 -  \sigma$ to $s_0$ and rearrange a little to get
\begin{equation}\label{eqn: Stone's Thm 1}
			 \int_{s_0 - \sigma }^{ s_0 } \int_{ M } \rho \Big| H_k + \frac{ F_k^{ \bot } }{ 2s } \Big|^2 \, d\mu_{g_s}^{ (\hat{p}_k, T), \lambda_k } = \int_{ M } \rho \, d\mu_{g_{ s_0 - \sigma } }^{ (\hat{p}_k, T), \lambda_k } - \int_{ M } \rho \, d\mu_{g_{ s_0 } }^{ (\hat{p}_k, T), \lambda_k }.
\end{equation}
The difficulty now is that in general, $\lim_{ k \rightarrow \infty } \theta(p_k, t_k) \neq \Theta(p)$.  The proof is now by contradiction.  If $p$ is a general singular point but is not a special singular point, then by definition there exists some function $\epsilon(t)$ with $\epsilon(t) \rightarrow 0$ as $t \rightarrow T$ such that
\begin{equation*}
	\abs{ h }^2(p,t) \leq \frac{ \epsilon(t) }{ 2( T - t ) }
\end{equation*}
for all time $t \in [0, T)$.  This implies that any blow-up about the single fixed point $\hat{p}$ would satisfy $\abs{ h }^2 = 0$.  From Theorem \ref{thm: Huisken classification} we know that a blow-up around a special singular point is one of $n+1$ different hypersurfaces.  Furthermore, the heat density function evaluated on these hypersurfaces takes on $n+1$ distinct values, of which $1$ is the smallest, which corresponds to a unit multiplicity plane.  Full details of these calculations can be found in the Appendix of \cite{St}.  Crucially, since $M_{ \infty }$ is also embedded, it can only be a unit multiplicity plane, and not a plane of higher mulitplicity.  Since $\Theta$ is upper-semicontinuous, it is actually continuous at $p$, and therefore $\Theta = 1$ in a whole neighbourhood of $p$.  Dini's Theorem on the monotone convergence of functions now implies for $k$ sufficiently large, that $\theta(p_k, t_k) \rightarrow \Theta(p)$ uniformly.  This is the point at which the argument breaks down in high codimenion: since embeddedness of the initial submanifold is not preserved, the blow-up limit may be a plane of higher multiplicity, and thus  $\Theta(p)$ could be any integer.  Therefore, we cannot conclude that $\Theta$ is continuous at $p$, and Dini's Theorem is no longer applicable.

We complete Stone's argument: Returning now to equation \eqref{eqn: Stone's Thm 1}, for every fixed $s_0$ and every fixed point $\hat{p}_k$ the monontonicty formula implies
\begin{equation*}
-\int_{ M } \rho \, d\mu_{g_{ s_0 }}^{ (\hat{p}_k, T), \lambda_k } \leq -\int_{ M } \rho \, d\mu_{g_{s_0}}^{ (\hat{p}_k, T), \lambda_l }
\end{equation*}
for all $l > k$.  Estimating as such, for all $l > k$ we have
\begin{equation*}
			 \int_{s_0 - \sigma }^{ s_0 } \int_{ M } \rho \Big| H_k + \frac{ F_k^{ \bot } }{ 2s } \Big|^2 \, d\mu_{g_s}^{ (\hat{p}_k, T), \lambda_k } \leq \int_{ M } \rho \, d\mu_{g_{ s_0 - \sigma }}^{ (\hat{p}_k, T), \lambda_k } - \int_{ M } \rho \, d\mu_{g_{ s_0 }}^{ (\hat{p}_k, T), \lambda_l }.
\end{equation*}
Sending $l \rightarrow \infty$ and using Proposition \ref{p: tightness of measures} we obtain
\begin{equation*}
	\int_{s_0 - \sigma }^{ s_0 } \int_{ M } \rho \Big| H_k + \frac{ F_k^{ \bot } }{ 2s } \Big|^2 \, d\mu_{g_s}^{ (\hat{p}_k, T), \lambda_k } \leq \int_{ M } \rho \, d\mu_{g_{ s_0 - \sigma } }^{ (\hat{p}_k, T), \lambda_k } - \Theta(p_k).
\end{equation*}
By Dini's Theorem, given any $\epsilon > 0$, there exists a $k_0$ such that for all $k > k_0$ we have
\begin{equation*}
	\int_{s_0 - \sigma }^{ s_0 } \int_{ M } \rho \Big| H_k + \frac{ F_k^{ \bot } }{ 2s } \Big|^2 \, d\mu_{g_s}^{ (\hat{p}_k, T), \lambda_k } \leq \epsilon
\end{equation*}
and thus
\begin{equation*}
	\lim_{ k \rightarrow \infty } \int_{s_0 - \sigma }^{ s_0 } \int_{ M } \rho \Big| H_k + \frac{ F_k^{ \bot } }{ 2s } \Big|^2 \, d\mu_{g_s}^{ (\hat{p}_k, T), \lambda_k } = 0.
\end{equation*}
Using the blow-up procedure of the previous section we obtain a limit flow on $(-\infty, 0)$, and by Proposition \ref{p: tightness of measures} the limit solution satisfies $H_{ \infty } = -1/(2s)F_{ \infty }$ and is again not flat.  This is a contradicton, since by Proposition \ref{p: tightness of measures},

\begin{equation*}
	\lim_{ k \rightarrow \infty } \theta(p_k, t_k) = \Theta(p) = 1,
\end{equation*}
which implies the limit solution is a plane and hence flat.
\end{proof}

\section{Applications of Hamilton's blowup procedure for type I and II singularities}
In this section we present two applications of Hamilton blowups to the mean curvature flow.  In the first case, we show how a type I Hamilton blowup procedure and the compactness theorem for mean curvature flows can be used instead of a normalised flow to give an alternate proof of the limiting spherical shape of the evolving submanifold considered in \cite{Hu1} and \cite{AB}.  The interested reader may like compare the following with the corresponding argument in the Ricci flow, which can found, for example, in \cite{Top}.   Here the Codazzi equation performs the same role as the contracted second Bianchi indentity, and the Codazzi Theorem that of Schur's Theorem.  For a proof of the Codazzi Theorem we refer the reader to \cite[Thm. 26]{Sp}.  Pick any sequence of times $(t_k)_{k \in \mathbb{N}}$ such that $t_k \rightarrow T$ as $k \rightarrow \infty$.  The Pinching Lemma implies that $\abs{ h }^2$ and $\abs{ H }^2$ have equivalent blow-up rates, so we can in fact rescale by $\abs{ H }^2$.  Since $M$ is assumed to be closed, we can pick a sequence of points $(p_k)_{ k \in \mathbb{N} }$ defined by
\begin{equation*}
	\abs{H}(p_k, t_k)  = \max_{ p \in M }\abs{ H }( p, t_k ).
\end{equation*}
For convenience, set $\lambda_k := \abs{H}(p_k, t_k)$.  We define a sequence of rescaled and translated flows by
\begin{equation*}
	F_k(q,s) = \lambda_k \big( F(q, t_k + s/\lambda_k^2) - F(p_k,t_k) \big),
\end{equation*}
where for each $k$, $F_k : M \times [\lambda_k^2T, 0] \rightarrow \mathbb{R}^{n+k}$ is a solution of the mean curvature flow (in the time variable $s$).  The second fundamental form of the rescaled flows is uniformly bounded above independent of $k$ and we can apply the compactness theorem for mean curvature flows to obtain a smooth limit solution of the mean curvature flow $F_{\infty} : M_{\infty} \times (-\infty, 0] \rightarrow \mathbb{R}^{n+k}$.  Futhermore, at $s=0$ the limit solution satisfies $\abs{ H }^2_k = 1$ by construction, so the limit is not flat.  By definition of the rescaling, the second fundamental form rescales as $\abs{ h }_k^2 = \abs{h}^2/ \lambda_k^2$, so the estimate of Theorem 5.1 of \cite{Hu1} or Theorem 4 of \cite{AB} rescales as
\begin{equation*}
	\abs{ \ho }^2_k\leq C_0 \lambda_k^{ -\delta }\abs{ H }^2_k.
\end{equation*}
The limit therefore satisfies
\begin{equation}\label{e: blow up 1}
	\abs{ \ho }^2_{ \infty } = 0,
\end{equation}
which implies that $F_{\infty}(M_{\infty},t)$ is a totally umbilic submanifold.  By the Codazzi Theorem, $F_{\infty}(M_{\infty},t)$ must be plane or a $n$-sphere lying in a $(n+1)$-dimensional affine subspace of $\mathbb{R}^{n+k}$. We know that $\abs{ H }_{ \infty }(\cdot, 0) = 1$ at some point, so $F_{\infty}(M_{\infty},0)$ is not a plane.
\end{proof}

Using Hamilton's blowup procedure for type II singularities and the Pinching Lemma, we can give a rudimentary classification theorem for type II singularities of the mean curvature flow in arbitrary codimension.  A similar classification result for hypersurfaces appears in \cite{HS}.  The proof of the following result follows along the sames lines as \cite{HS}.

\begin{thm}
Let $F : M^n \times [0, T) \rightarrow \mathbb{R}^{n+k}$ be a solution of the mean curvature flow.  Assume that the initial submanifold is closed and satisfies $\abs{ H }_{min} > 0$ and $\abs{ h }^2 < 4/(3n) \abs{ h }^2$.  If the evolving submanifold develops a type II singularity at $t \rightarrow T$, then there exists a sequence of rescaled flows $F_k : M \times I_k \rightarrow \mathbb{R}^{n+k}$ that subconverges to a limit flow $F_{\infty} : M_{\infty} \times (-\infty, \infty) \rightarrow \mathbb{R}^{n+k}$ on compact sets of $\mathbb{R}^{n+k} \times \mathbb{R}$ as $k \rightarrow \infty$.  The limit flow satisfies $0 < \abs{ H }_{ \infty } \leq 1$ and is equal to one at least at one point, and furthermore has positive scalar curvature everywhere.
\end{thm}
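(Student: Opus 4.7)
The plan is to implement the Hamilton-style point-picking procedure for type II singularities, use the Pinching Lemma to rescale by $\abs{H}$ rather than $\abs{h}$, and then apply Theorem \ref{thm: compactness for flows} to extract an eternal limit on which the curvature identities follow from the pinching condition and the Gauss equation. Choose any sequence $T_k \nearrow T$ with $T_k < T$, and for each $k$ select $(p_k, t_k) \in M \times [0, T_k]$ attaining
\[ Q_k := \max_{p \in M,\, t \in [0, T_k]} (T_k - t)\, \abs{H}^2(p, t) = (T_k - t_k)\, \abs{H}^2(p_k, t_k); \]
this maximum is attained since $M$ is closed. Because the Pinching Lemma makes $\abs{H}^2$ comparable to $\abs{h}^2$, the type II hypothesis ensures that, with a suitable choice of $T_k$, $Q_k \to \infty$. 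Set $\lambda_k := \abs{H}(p_k, t_k)$ and define the translated, parabolically rescaled flows
\[ F_k(q, s) := \lambda_k \bigl( F(q,\, t_k + s/\lambda_k^2) - F(p_k, t_k) \bigr), \]
each defined on $s \in I_k := [-\lambda_k^2 t_k,\, Q_k)$. Since $t_k \to T > 0$ and $\lambda_k \to \infty$, the left endpoint tends to $-\infty$; since $Q_k \to \infty$, the right endpoint tends to $+\infty$, so the intervals $I_k$ exhaust $\mathbb{R}$.

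The defining property of $(p_k, t_k)$ yields the crucial bound
\[ \abs{H}_k^2(q, s) = \frac{\abs{H}^2(q,\, t_k + s/\lambda_k^2)}{\lambda_k^2} \leq \frac{T_k - t_k}{T_k - t_k - s/\lambda_k^2} = \frac{1}{1 - s/Q_k}, \]
which converges to $1$ uniformly on every compact $s$-interval. The Pinching Lemma then bounds $\abs{h}_k$ uniformly, and standard parabolic interior estimates control all higher covariant derivatives. The local area bound needed by Theorem \ref{thm: compactness for flows} is inherited from Huisken's monotonicity formula exactly as in the type I analysis. Theorem \ref{thm: compactness for flows} therefore applies and, after passing to a subsequence, $(F_k, M)$ converges on compact sets of $\mathbb{R}^{n+k} \times \mathbb{R}$ to a complete limit mean curvature flow $F_\infty : M_\infty \times (-\infty, \infty) \to \mathbb{R}^{n+k}$. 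At the limit base point $p_\infty$ we have $\abs{H}_\infty(p_\infty, 0) = 1$ by construction, and passing the bound above to the limit gives $\abs{H}_\infty \leq 1$ throughout $M_\infty \times (-\infty, \infty)$.

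To finish, I would pass the pinching condition $\abs{h}^2 \leq \tfrac{4}{3n}\abs{H}^2$ to the limit; being scale-invariant and preserved by the flow (Lemma \ref{l: preservation of pinching}), it holds on $F_\infty$. The Gauss equation then gives the scalar curvature
\[ R_\infty = \abs{H}_\infty^2 - \abs{h}_\infty^2 \geq \Bigl(1 - \tfrac{4}{3n}\Bigr)\abs{H}_\infty^2, \]
which is strictly positive wherever $\abs{H}_\infty > 0$ (for $n \geq 2$). To rule out interior zeros of $\abs{H}_\infty$, I would invoke the strong parabolic maximum principle applied to the evolution equation for $\abs{H}^2$ derived in \cite{AB}: its reaction term $2 \sum_{i,j}(H \cdot h_{ij})^2$ is nonnegative, so a nonnegative solution attaining the positive value $1$ at an interior point cannot vanish elsewhere. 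The main obstacle of the argument is the Hamilton point-picking step: one must calibrate $T_k$ and hence $(p_k, t_k)$ so that both halves of the rescaled time interval become unbounded while the bound $\abs{H}_k^2 \leq 1/(1 - s/Q_k)$ becomes asymptotically sharp at $s = 0$, since this is precisely the mechanism that produces an eternal limit attaining its maximal mean curvature.
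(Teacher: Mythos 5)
The paper itself offers no proof of this theorem: it simply states that ``the proof follows along the same lines as \cite{HS}.'' So what is really being compared here is your reconstruction against the Huisken--Sinestrari type~II blow-up procedure that the paper implicitly invokes. Your point-picking is exactly that procedure, correctly transposed to the high-codimension setting: the choice of $(p_k,t_k)$ maximising $(T_k - t)\abs{H}^2$ over $M\times[0,T_k]$, the scale $\lambda_k = \abs{H}(p_k,t_k)$, the derived bound $\abs{H}_k^2 \leq 1/(1 - s/Q_k)$, and the observation that both endpoints of $I_k$ escape to $\pm\infty$ (the left one because $t_k \to T$, which in turn follows from $\lambda_k \to \infty$ and the smoothness of the flow on $[0,T']$ for $T' < T$). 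Using the Pinching Lemma to make $\abs{H}^2$ the quantity that drives the rescaling is precisely the point the paper's preceding paragraph makes. Your remarks on the local area bound are correct in spirit, though you should note the monotonicity formula must be centred at the moving points $F(p_k,t_k)$ with the ``virtual'' centre time $t_k + 1/\lambda_k^2$ rather than at a fixed $(\hat p, T)$; the computation then goes through verbatim.

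The one place where your argument has a genuine gap is the claim that $\abs{H}_\infty > 0$ \emph{everywhere}, which is also what feeds the positivity of the scalar curvature. You appeal to the strong maximum principle for the evolution of $\abs{H}^2$ and say the reaction term $2\sum_{i,j}(H\cdot h_{ij})^2$ is nonnegative. But the full equation from \cite{AB} is
\[
\frac{\p}{\p t}\abs{H}^2 \;=\; \Delta\abs{H}^2 \;-\; 2\abs{\nablap H}^2 \;+\; 2\sum_{i,j}(H\cdot h_{ij})^2,
\]
and the term $-2\abs{\nablap H}^2$ cannot be absorbed into a $c\abs{H}^2$ bound near the zero set of $H$, so the inequality $\p_t u \geq \Delta u + cu$ required by the scalar strong maximum principle is not available for $u = \abs{H}^2$. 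In the hypersurface case this issue never arises because one works directly with the scalar equation $\p_t H = \Delta H + \abs{A}^2 H$; in higher codimension, passing to $\abs{H}$ via Kato's inequality $\abs{\nabla\abs{H}} \leq \abs{\nablap H}$ yields only the \emph{upper} bound $\p_t\abs{H} - \Delta\abs{H} \leq \abs{h}^2\abs{H}$, which is the wrong direction for a minimum principle. To make the positivity rigorous one needs a genuinely vector-valued strong maximum principle (or a unique-continuation/Harnack argument) for the normal-bundle parabolic system $\nabla_t^\perp H = \Delta^\perp H + \sum_{i,j}\langle H, h_{ij}\rangle h_{ij}$, or alternatively an argument that exploits the pinching more structurally: at a zero of $\abs{H}_\infty$ the pinching forces $h_\infty = 0$, after which one must propagate this degeneracy to contradict $\abs{H}_\infty(p_\infty,0)=1$. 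As you have written it, the step is asserted rather than proved, and the proof of the scalar curvature positivity inherits the same gap.
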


\section{A new estimate for the normalised mean curvature flow}
The following estimate simplifies Huisken's original proof in \cite{Hu1} of the exponential convergence of the normalised mean flow to a sphere.  We refer the reader to sections 9 and 10 of \cite{Hu1} for detail concerning the normalised flow.  We avoid the use of integral estimates and Sobolev inequalities and use only the extreme value theorem and the maximum principle.

\begin{prop}
Suppose $\tilde F_{ \tilde{ t } }(M)$ is an initially strictly convex hypersurface immersed in $\mathbb{R}^{n+1}$ moving by the normalised mean curvature flow.  For all time $\tilde{t} \in [0, \infty)$ we have the estimate
\begin{equation*}
	\abs{ \tilde{\nabla} \tilde{h} }^2 + \abs{ \tilde{\ho} }^2 \leq Ce^{ -\delta \tilde{t} }.
\end{equation*}
\end{prop}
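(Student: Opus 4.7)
The plan is to apply the parabolic maximum principle to a suitably weighted combination of $\abs{\tilde\nabla\tilde h}^2$ and $\abs{\tilde{\ho}}^2$, replacing the $L^p$-iteration and Sobolev inequality of \cite{Hu1} by pointwise estimates. Two ingredients drive the argument. First, the convergence analysis already in place for the normalised flow gives uniform two-sided bounds on $\abs{\tilde H}$ and on the metric $\tilde g$, so that $\abs{\tilde h}^2$ is bounded; moreover the pinching-based decay $\abs{\ho}^2 \leq C\abs{H}^{2-\sigma}$ of \cite{Hu1} and \cite{AB}, after rescaling with the normalisation factor $\lambda(\tilde t) \sim e^{\tilde t/2}$, yields the exponential bound $\abs{\tilde{\ho}}^2 \leq Ce^{-\delta_0 \tilde t}$ for some $\delta_0 > 0$. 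Second, the hypersurface Codazzi identity together with the basic gradient estimate $\abs{\tilde\nabla\tilde h}^2 \geq \tfrac{3}{n+2}\abs{\tilde\nabla\tilde H}^2$ produces a Kato-type inequality $\abs{\tilde\nabla\tilde{\ho}}^2 \geq c_n\,\abs{\tilde\nabla\tilde h}^2$ with $c_n > 0$ depending only on $n$; this is what lets a good gradient term from $\abs{\tilde{\ho}}^2$ control a bad reaction term in $\abs{\tilde\nabla\tilde h}^2$.

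The evolution equations under the normalised flow, obtained from the unnormalised versions by adding the $\tilde r$-corrections (where $\tilde r$ is the spatial mean of $\abs{\tilde H}^2$), take the schematic form
\[ \bigl(\partial_{\tilde t} - \tilde\Delta\bigr)\abs{\tilde{\ho}}^2 \leq -2\abs{\tilde\nabla\tilde{\ho}}^2 + C_0\abs{\tilde{\ho}}^2, \]
\[ \bigl(\partial_{\tilde t} - \tilde\Delta\bigr)\abs{\tilde\nabla\tilde h}^2 \leq -2\abs{\tilde\nabla^2\tilde h}^2 + C_1\abs{\tilde\nabla\tilde h}^2 + C_2 \abs{\tilde{\ho}}^2 \abs{\tilde\nabla\tilde h}. \]
Setting $f := \abs{\tilde\nabla\tilde h}^2 + A\,\abs{\tilde{\ho}}^2$ for $A$ large enough that $2Ac_n > C_1 + \delta$, the Kato inequality forces the term $-2A\abs{\tilde\nabla\tilde{\ho}}^2$ to dominate $C_1\abs{\tilde\nabla\tilde h}^2$ pointwise. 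After splitting $-\delta\abs{\tilde\nabla\tilde h}^2 = -\delta f + A\delta\abs{\tilde{\ho}}^2$ and controlling the cross term by Cauchy-Schwarz, the remaining coefficients of $\abs{\tilde{\ho}}^2$ inherit its exponential decay, and one arrives at
\[ \bigl(\partial_{\tilde t} - \tilde\Delta\bigr)f \leq -\delta f + Ke^{-\delta_0 \tilde t}. \]

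To extract the decay of $f$, I would apply the maximum principle to $g := e^{\delta \tilde t}f$ for any $\delta \in (0, \delta_0)$, which satisfies $\bigl(\partial_{\tilde t} - \tilde\Delta\bigr)g \leq Ke^{(\delta-\delta_0)\tilde t}$. Since $M$ is compact the extreme value theorem produces, on each interval $[0, T']$, a spacetime maximum of $g$; integrating the forcing term in time and applying the maximum principle at such a point yields a uniform bound on $g$, independent of $T'$. This is exactly the claimed estimate $\abs{\tilde\nabla\tilde h}^2 + \abs{\tilde{\ho}}^2 \leq Ce^{-\delta\tilde t}$.

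The main obstacle I anticipate is the bookkeeping of the $\tilde r$-corrections in the evolution equation for $\abs{\tilde\nabla \tilde h}^2$: while on heuristic grounds these yield only bounded coefficients of $\abs{\tilde\nabla\tilde h}^2$, a careful derivation is needed to pin down $C_1$ and to confirm there are no leftover terms without a factor of $\abs{\tilde{\ho}}$. The two ingredients without which the argument collapses are the two-sided uniform bound on $\abs{\tilde H}$ and the Kato-type gradient inequality: the former keeps $\abs{\tilde h}^2$ bounded so that the bad term $C_1\abs{\tilde\nabla\tilde h}^2$ has a bounded coefficient, and the latter is the only mechanism, in the absence of integral estimates, by which the good gradient term from $\abs{\tilde{\ho}}^2$ can absorb it.
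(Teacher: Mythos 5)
Your argument is correct, but it takes a genuinely different route from the paper's, and the contrast is instructive. The paper works with $\tilde f := \epsilon\abs{\tilde\nabla\tilde h}^2 + N\abs{\tilde{\ho}}^2/\abs{\tilde H}^2$ and produces a closed differential inequality $(\partial_{\tilde t} - \tilde\Delta)\tilde f \leq (\text{transport}) - \delta\tilde f$ by two mechanisms that you do not use at all: first, it splits $\tilde\nabla^2\tilde h$ into symmetric and anti-symmetric parts and applies Simons' identity to extract a good zeroth-order term $\abs{\tilde\nabla^2\tilde h}^2 \geq \epsilon_1\abs{\tilde{\ho}}^2$, with $\epsilon_1 \sim \kappa_{\min}^4$; second, it proves $\abs{H\cdot\nabla_i h_{kl} - \nabla_i H\cdot h_{kl}}^2 \geq \epsilon_2\abs{\nabla h}^2$ by a compactness/extreme-value argument on the quadratic form $G(B) = \abs{h_{pp}B_{ijk} - h_{ij}B_{kpp}}^2$ over the unit sphere in $(h,B)$, where $\epsilon_2$ again degenerates as $\kappa_{\min}\to 0$ and strict convexity is what makes $G$ non-degenerate. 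Because both good terms are generated internally, the paper's proof derives the decays of $\abs{\tilde\nabla\tilde h}^2$ and $\abs{\tilde{\ho}}^2$ simultaneously and gets a pure $-\delta\tilde f$ forcing.

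You instead take $\abs{\tilde{\ho}}^2 \leq Ce^{-\delta_0\tilde t}$ as input (which is indeed an immediate consequence of scaling $\abs{\ho}^2\leq C_0 H^{2-\sigma}$ against the two-sided $\abs{\tilde H}$ bound — your derivation is valid) and replace the paper's compactness argument by the purely algebraic Kato-type estimate $\abs{\tilde\nabla\tilde{\ho}}^2 = \abs{\tilde\nabla\tilde h}^2 - \tfrac{1}{n}\abs{\tilde\nabla\tilde H}^2 \geq \tfrac{2n-2}{3n}\abs{\tilde\nabla\tilde h}^2$, which requires only Codazzi (through the standard gradient estimate) and no convexity. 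This is cleaner — your $c_n$ is an explicit dimensional constant, whereas the paper's $\epsilon_2$ comes from a non-constructive minimization and degenerates with the convexity — and you avoid Simons' identity entirely. The trade-offs: your proof is not self-contained (the $\abs{\tilde{\ho}}^2$ decay enters as an external hypothesis), your differential inequality has a forcing term $Ke^{-\delta_0\tilde t}$ rather than a clean $-\delta\tilde f$ and so needs the slightly more careful integrated maximum-principle step you describe, and the Kato inequality is vacuous for $n=1$ (not a real issue since Huisken's normalisation is for $n\geq2$). Your worry about leftover terms in the normalised evolution of $\abs{\tilde\nabla\tilde h}^2$ is unfounded: the $\tilde r$-correction is $-\tfrac{4}{n}\tilde r\abs{\tilde\nabla\tilde h}^2$ by scaling weight, so every term carries a factor of $\abs{\tilde\nabla\tilde h}^2$ with a bounded coefficient, and there is in fact no $C_2\abs{\tilde{\ho}}^2\abs{\tilde\nabla\tilde h}$ term (including it harmlessly is fine).
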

\begin{proof}
The idea is to consider $f:= \epsilon \abs{ \nabla h }^2 + N \abs{ \ho }^2/\abs{ H }^2$, where $\epsilon > 0$ will be chosen small and $N$ sufficiently large.  For the moment we work in the un-normalised setting.  The evolution equation for $\abs{ \nabla h }^2$ is of the form
\begin{equation*}
	\frac{ \p }{ \p t } \abs{ \nabla h }^2 = \Delta \abs{ \nabla h }^2 - 2\abs{ \nabla^2 h }^2 + h*h*\nabla h * \nabla h,
\end{equation*}
so we obtain the estimate
\begin{equation*}
	\frac{ \p }{ \p t } \abs{ \nabla h }^2 \leq \Delta \abs{ \nabla h }^2 - 2\abs{ \nabla^2 h }^2 + c_1 \abs{ H }^2 \abs{ \nabla h }^2.
\end{equation*}
The evolution equation for $\abs{ \ho }^2 / \abs{ H }^2$ is given by
\begin{equation*}
	\frac{ \p }{\p t} \left( \frac{ \abs{ \ho }^2 }{ \abs{ H }^2 } \right) = \Delta \left( \frac{ \abs{ \ho }^2 }{ \abs{ H }^2 } \right) + \frac{ 2 }{ \abs{ H }^2 } \big\langle \nabla_i \abs{ H }^2, \nabla_i \left( \frac{ \abs{ \ho }^2 }{ \abs{ H }^2 } \right) \big\rangle - \frac{ 2 }{ \abs{ H }^2 } \abs{ H \cdot \nabla_i h_{kl} - \nabla_i H \cdot h_{kl} }^2
\end{equation*}
(see Lemma 5.2 of \cite{Hu1} and set $\sigma = 0$).
The importance of including the gradient term $\abs{ \nabla h }^2$ in $f$ is the following: the antisymmetric part of $\abs{ \nabla h }^2$ contains curvature terms which we can use to obtain exponential convergence.  We split $\nabla^2 h$ into symmetric and anti-symmetric components, and upon discarding the the symmetric part we obtain
\begin{align*}
	\abs{ \nabla^2 h }^2 &\geq \frac{ 1 }{ 4 } \abs{ \nabla_i \nabla_j h_{kl} - \nabla_k \nabla_l h_{ij} }^2 \\
	&= \frac{ 1 }{ 4 } \abs{ R_{ikjp}h_{pl} + R_{iklp}h_{jp} }^2,
\end{align*}
where the last line follows from Simons' identity.  Some computation shows
\begin{equation*}
	\abs{ R_{ikjp}h_{pl} + R_{iklp}h_{jp} }^2 = 4 \sum_{i, j} ( \kappa_i^2 \kappa_j^4 - \kappa_i^3 \kappa_j^3 ),
\end{equation*}
then using that $\kappa_{ \text{min} } > 0$ we estimate
\begin{align}
	\notag \sum_{i, j} ( \kappa_i^2 \kappa_j^4 - \kappa_i^3 \kappa_j^3 ) &\geq \kappa_{ \text{min} }^2 \sum_{ i < j } \kappa_i \kappa_j(\kappa_i - \kappa_j)^2 \\
	&\geq n\kappa_{ \text{min} }^4 \abs{ \ho }^2 := \epsilon_1 \abs{ \ho }^2. \label{eqn: new max arg 1}
\end{align}
The next important step is to estimate the term $ \abs{ H \cdot \nabla_i h_{kl} - \nabla_i H \cdot h_{kl} }^2$ from below in terms of $\abs{ \nabla h }^2$.  It is a relatively simple matter to estimate this term from below in terms of $\abs{ \nabla H }^2$, however we want to use this good negative term to control the bad reaction term $c_1\abs{ H }^2 \abs{ \nabla h }^2$ of the evolution equation for $\abs{ \nabla h }^2$, so we need an estimate in terms of $\nabla h$.  To do this, as always let $h$ denote the second fundamental form and $B$ a totally symmetric three tensor (we have $\nabla h$ in mind).  Consider the space $\mathcal{A} := \{ h, B : \abs{ h }^2 =1, \abs{ B }^2 =1 \}$, and we also assume strict convexity of $h$.  The conditions on $h$ and $B$ imply this space is compact.  Now consider the function $G(B) =  \abs{ h_{pp} \cdot B_{ijk} - h_{ij} B_{kpp} }^2$.  We claim $G(B) \geq \delta$ for some $\delta > 0$.  Since $\mathcal{A}$ is compact, by the extreme value theorem $G$ assumes its minimum value at some element of $\mathcal{A}$.  We show by contradiction that $G \neq 0$ which proves the claim.  The anti-symmetric part of $G$ is $\abs{ B_{ipp} \cdot h_{jk} - B_{jpp} \cdot h_{ik} }^2$.  We compute at a point where $G$ obtains its minimum, and rotating coordinates so that $e_1 = \nabla H / \abs{ \nabla H }$ we have
\begin{equation*}
	\abs{ B_{ipp} \cdot h_{jk} - B_{jpp} \cdot h_{ik} }^2 = \abs{ B_{ipp} }^2 \left( \abs{ h }^2 - \sum_{ k =1 }^n h_{1k}^2 \right),
\end{equation*}
so if $G=0$ then $\abs{ B_{ipp} }^2 = 0$ or $ \abs{ h }^2 = \sum_{ k =1 }^n h_{1k}^2$.  The latter implies that $\abs{ h }^2 = h_{11}^2$, which contradicts the strict convexity of the hypersurface.  Therefore, if $G(B) = 0$, then $\abs{ B_{ipp} }^2 = 0$.  From the definition of $G$ it now follows that the full tensor $\abs{ B }^2 = 0$.  This contradicts $\abs{ B }^2 =1$ and the claim follows.  The term $h_{pp} \cdot B_{ijk} - h_{ij} B_{kpp}$ is a quadratic form, so for arbitrary $h$ and $B$ we obtain $G(B) \geq \delta \abs{ h }^2 \abs{ B }^2$ by scaling.  Applying this to our situation, we have $\abs{ B }^2 = \abs{ \nabla h }^2$, then estimating $\abs{ h }^2 \geq n \kappa_{\text{min}}^2$ we obtain
\begin{equation}
	\abs{ H \cdot \nabla_i h_{kl} - \nabla_i H \cdot h_{kl} }^2 \geq \delta n \kappa_{\text{min}}^2 \abs{ \nabla h }^2 := \epsilon_2 \abs{ \nabla h }^2 . \label{eqn: new max arg 2}
\end{equation}
Returning now to the evolution equation for $f$, converting to the normalised setting and using the estimates \eqref{eqn: new max arg 1} and \eqref{eqn: new max arg 2} we get
\begin{align*}
	\frac{ \p }{ \p \tilde{ t } } \tilde{ f }  &\leq \tilde{ \Delta } \tilde{ f } - \epsilon_1 \abs{ \tilde{ \ho } }^2 + c_1 \abs{ \tilde{ H } }^2 \abs{ \tilde{ \nabla } \tilde{ h } }^2 + \frac{ 2 }{ \abs{ \tilde{ H } }^2 }  \big\langle \tilde{ \nabla }_i \abs{ \tilde{ H } }^2, \tilde{ \nabla }_i \tilde{ f } \big\rangle - \frac{ 2 }{ \abs{ \tilde{ H } }^2 } \big\langle \tilde{ \nabla }_i \abs{ \tilde{ H } }^2, \tilde{ \nabla }_i ( \epsilon \abs{ \tilde{ \nabla } \tilde{ h } }^2 ) \big\rangle \\
	&\qquad - \frac{ 2 \epsilon_2 N }{ \abs{ \tilde{ H } }^2 } \abs{ \tilde{ \nabla } \tilde{ h } }^2 - \frac{ 4 \epsilon }{ n } \tilde{ \hbar } \abs{ \tilde{ \nabla } \tilde{ h } }^2.
\end{align*}
In the normalised setting the second fundamental form, and therefore all higher derivatives, are bounded above.  We can therefore estimate
\begin{equation*}
	\big\langle \tilde{ \nabla }_i \abs{ \tilde{ H } }^2, \tilde{ \nabla }_i ( \epsilon \abs{ \tilde{ \nabla } \tilde{ h } }^2 ) \big\rangle \leq 4 \abs{ \tilde{ H } } \abs{ \tilde{ \nabla } \tilde{ H } } \abs{ \tilde{ \nabla } \tilde{ h } } \abs{ \tilde{ \nabla }^2 \tilde{ h } } \leq C \abs{ \tilde{ \nabla } \tilde{ h } }^2.
\end{equation*}
Using $0 < C_{ \text{min} } \leq \abs{ \tilde{ H } }_{ \text{min} } \leq \abs{ \tilde{ H } }_{ \text{max} } \leq  C_{ \text{max} }$, we make $N$ sufficiently large to consume the bad $\abs{ \tilde{ \nabla } \tilde{ h } }^2$ terms and then we discard these terms.  Using again $C_{ \text{min} } \leq \abs{ \tilde{ H } }_{ \text{min} }$ we estimate
\begin{equation*}
		-\epsilon_1 \abs{ \tilde{ \ho } }^2 - \frac{ 4 \epsilon }{ n } \tilde{ \hbar } \abs{ \tilde{ \nabla } \tilde{ h } }^2 \leq - \delta \tilde{ f }
\end{equation*}
for some small $\delta$.  We ultimately obtain  
\begin{equation*}
	\frac{ \p }{ \p \tilde{ t } } \tilde{ f }  \leq \tilde{ \Delta } \tilde{ f } + \tilde{U}^k \tilde{ \nabla }_k \tilde{ f } - \delta \tilde{ f }.
\end{equation*}
This implies
\begin{equation*}
	\frac{ \p }{ \p \tilde{ t } } ( e^{ \delta \tilde{ t } } \tilde{ f } ) \leq \tilde{ \Delta } ( e^{ \delta \tilde{ t } } \tilde{ f } ) + U^k \tilde{ \nabla }_k ( e^{ \delta \tilde{ t } } \tilde{ f } ),
\end{equation*}
and from the maximum principle we conclude $e^{ \delta \tilde{ t } } \tilde{ f } \leq C$ and the theorem follows since $\abs{ \tilde{ H } }_{ \text{max} } \leq  C_{ \text{max} }$.
\end{proof}
Note that we obtain exponential decay of both $\abs{ \tilde{ \nabla } \tilde{ h } }^2$ and $\abs{ \tilde{ \ho } }^2$ at the same time.  Since we have pointwise control on the decay of $\abs{ \tilde{ \ho } }^2$, exponential decay of the higher derivatives can be proved by the maximum principle in a similar manner as the un-normalised estimates.  The important modification needed is that one adds in $\abs{ \tilde{ \ho } }^2$, which is exponentially decaying, rather than $\abs{ h }^2$, which is not, to generate the favourable gradient terms.  The same proof goes through in the high codimension setting of \cite{AB} provided we can estimate a lower bound for
\begin{align*}
	\abs{ \nabla^2 h }^2 &\geq \frac{ 1 }{ 4 } \abs{ \nabla_i \nabla_j h_{kl} - \nabla_k \nabla_l h_{ij} }^2 \\
	&= \frac{ 1 }{ 4 } \abs{ \Rp_{ik\alpha\beta}h_{jl\alpha}\nu_{\beta} + R_{ikjp}h_{pl} + R_{iklp}h_{jp} }^2
\end{align*}
in terms of $\abs{ \ho }^2$.  Such an estimate could hold for $c < 1/(n-1)$, although we have not seriously attempted to do this calculation.  We remark that it would be nice to use the same idea in the un-normalised setting, and avoid the integral estimates completely.  Unfortunately, at the moment we can only make such an argument work if the submanifold is already extremely pinched.

\begin{bibdiv}
\begin{biblist}

\bib{AB}{article}{
author={Andrews, Ben},
   author={Baker, Charles},
   title={Mean curvature flow of pinched submanifolds to spheres},
   journal={J. Differential Geom.},
   volume={85},
   date={2010},
   number={3},
   pages={357--396},
}

\bib{Bak}{thesis}{
 author={Baker, Charles}
 label={Bak}
 title={The mean curvature flow of submanifolds of high codimension}
 school={Australian National University}
 date={2011}
 note={PhD thesis}
 }

\bib{Br}{thesis}{
	author={Breuning, Patrick},
	label={Br}
	title={Immersions with local Lipschitz representation},
	school={Albert-Ludwigs-Universit\"{a}t}
	date={2011}
	note={PhD thesis}
}

\bib{CdCK}{article}{
   author={Chern, S. S.},
   author={do Carmo, M.},
   author={Kobayashi, S.},
   title={Minimal submanifolds of a sphere with second fundamental form of
   constant length},
   conference={
      title={Functional Analysis and Related Fields (Proc. Conf. for M.
      Stone, Univ. Chicago, Chicago, Ill., 1968)},
   },
   book={
      publisher={Springer},
      place={New York},
   },
   date={1970},
   pages={59--75},
}

\bib{Eck}{book}{
author={Ecker, Klaus},
	label={Eck}
   title={Regularity theory for mean curvature flow},
   series={Progress in Nonlinear Differential Equations and their
   Applications, 57},
   publisher={Birkh\"auser Boston Inc.},
   place={Boston, MA},
   date={2004},
   pages={xiv+165},
}

\bib{EG}{book}{
 author={Evans, Lawrence C.},
   author={Gariepy, Ronald F.},
   title={Measure theory and fine properties of functions},
   series={Studies in Advanced Mathematics},
   publisher={CRC Press},
   place={Boca Raton, FL},
   date={1992},
   pages={viii+268},
 }

   \bib{Ham}{article}{
 author={Hamilton, Richard S.},
 label={Ham},
   title={A compactness property for solutions of the Ricci flow},
   journal={Amer. J. Math.},
   volume={117},
   date={1995},
   number={3},
   pages={545--572},
  }

 \bib{HA}{book}{
 	author={Hopper, Christopher}
	author={Andrews, Ben}
	title={The Ricci flow in Riemannian geometry}
	date={2010}
	note={To appear}
}

\bib{Hu1}{article}{
   author={Huisken, Gerhard},
   title={Flow by mean curvature of convex surfaces into spheres},
   journal={J. Differential Geom.},
   volume={20},
   label={Hu1}
   date={1984},
   number={1},
   pages={237--266},
}

\bib{Hu2}{article}{
   author={Huisken, Gerhard},
   title={Asymptotic behavior for singularities of the mean curvature flow},
   journal={J. Differential Geom.},
   label={Hu2}
   volume={31},
   date={1990},
   number={1},
   pages={285--299},
}

\bib{Hu3}{article}{
   label={Hu3}
   author={Huisken, Gerhard},
   title={Local and global behaviour of hypersurfaces moving by mean
   curvature},
   conference={
      title={Differential geometry: partial differential equations on
      manifolds (Los Angeles, CA, 1990)},
   },
   book={
      series={Proc. Sympos. Pure Math.},
      volume={54},
      publisher={Amer. Math. Soc.},
      place={Providence, RI},
   },
   
   date={1993},
   pages={175--191},
}

\bib{HS}{article}{
   author={Huisken, Gerhard},
   author={Sinestrari, Carlo},
   title={Mean curvature flow singularities for mean convex surfaces},
   journal={Calc. Var. Partial Differential Equations},
   volume={8},
   date={1999},
   number={1},
   pages={1--14},
}

\bib{Il}{unpublished}{
	author={Ilmanen, Tom},
	title={Singularities of mean curvature flow of surfaces},
	date={1995},
	note={Available at: http://www.math.ethz.ch/~ilmanen/papers/pub.html},
}

\bib{KS}{article}{
   author={Kuwert, Ernst},
   author={Sch{\"a}tzle, Reiner},
   title={The Willmore flow with small initial energy},
   journal={J. Differential Geom.},
   volume={57},
   date={2001},
   number={3},
   pages={409--441},
 }

\bib{Lan}{article}{
  author={Langer, Joel},
  label={Lan}
   title={A compactness theorem for surfaces with $L_p$-bounded second
   fundamental form},
   journal={Math. Ann.},
   volume={270},
   date={1985},
   number={2},
   pages={223--234},
  }
  
 \bib{Law}{article}{
label={Law},
   author={Lawson, H. Blaine, Jr},
   title={Local rigidity theorems for minimal hypersurfaces},
   journal={Ann. of Math. (2)},
   volume={89},
   date={1969},
   number={1},
   pages={187--197},
}
  
  \bib{Man}{book}{
	author={Mantegazza, Carlo},
		label={Man}
	title={Lecture Notes on Mean Curvature Flow},
	note={To appear},
	date={2010},
}

\bib{Pet}{book}{
   author={Petersen, Peter},
   label={Pet}
   title={Riemannian geometry},
   series={Graduate Texts in Mathematics},
   volume={171},
   edition={2},
   publisher={Springer},
   place={New York},
   date={2006},
   pages={xvi+401},
}

\bib{Sm}{article}{
   author={Smoczyk, Knut},
   title={Self-shrinkers of the mean curvature flow in arbitrary
   codimension},
   label={Sm}
   journal={Int. Math. Res. Not.},
   date={2005},
   number={48},
   pages={2983--3004},
}

\bib{Sp}{book}{
 label={Sp},
author={Spivak, Michael},
   title={A comprehensive introduction to differential geometry. Vol. IV},
   edition={2},
   publisher={Publish or Perish Inc.},
   place={Wilmington, Del.},
   date={1979},
   pages={viii+561}
 }

\bib{St}{article}{
	author={Stone, Andrew},
   title={A density function and the structure of singularities of the mean
   curvature flow},
   journal={Calc. Var. Partial Differential Equations},
   volume={2},
   label={St}
   date={1994},
   number={4},
   pages={443--480},
  }
  
   \bib{Top}{book}{
  author={Topping, Peter},
  label={Top}
   title={Lectures on the Ricci flow},
   series={London Mathematical Society Lecture Note Series},
   volume={325},
   publisher={Cambridge University Press},
   place={Cambridge},
   date={2006},
   pages={x+113},
 }

\end{biblist}
\end{bibdiv}

\end{document}